\documentclass[11pt]{article}

\usepackage{amsmath, amssymb, amsthm, amsfonts, txfonts, pxfonts, amsbsy}
\usepackage[english]{babel}
\usepackage{a4wide}
\usepackage{bbm}
\usepackage{enumerate}
\usepackage[all]{xypic}
\usepackage{mathtools}
\usepackage{hyperref}
\usepackage{mathrsfs}
\numberwithin{equation}{section}


\usepackage{color}


\theoremstyle{plain}
\newtheorem{thm}{Theorem}[section]

\newtheorem{prop}[thm]{Proposition}

\newtheorem{cor}[thm]{Corollary}

\theoremstyle{definition}
\newtheorem{defi}[thm]{Definition}

\theoremstyle{remark}
\newtheorem{ex}[thm]{Example}

\newtheorem{rem}[thm]{Remark}


\newcommand{\sk}{\vskip .4cm}
\newcommand{\nn}{\nonumber}
\newcommand{\ot}{\otimes}
\newcommand{\be}{\begin{equation}}
\newcommand{\ee}{\end{equation}}
\newcommand{\ra}{\rightarrow}
\newcommand{\tra}{\triangleright}
\newcommand{\trl}{\triangleleft}
\newcommand{\id}{\mathrm{id}}
\newcommand{\U}{\mathcal{U}}

\renewcommand{\Q}{\mathfrak{G}}

\newcommand{\can}{\chi}
\newcommand{\da}{\rho}
\newcommand{\bbK}{\mathbb{K}}
\renewcommand{\1}{1}

\newcommand{\F}{\mathcal{F}}
\newcommand{\f}{\texttt{f}}

\renewcommand{\cot}{\gamma}
\newcommand{\co}[2]{\cot\left({#1}\ot{#2}\right)}
\newcommand{\coin}[2]{\bar\cot\left({#1}\ot{#2}\right)}
\newcommand{\mt}{\cdot_\cot}
\newcommand{\mtco}{\bullet_\cot}
\newcommand{\sg}{\sigma}
\newcommand{\sig}[2]{\sigma\left( {#1}\ot{#2}\right)}

\newcommand{\hg}{H_\cot}
\newcommand{\pg}{A_\cot}


\newcommand{\pgls}{{_\sg A}}
\newcommand{\bgls}{{_\sg B}}

\newcommand{\mtcols}{~{_\sg\bullet}}
\newcommand{\col}{\varphi^\ell}

\newcommand{\zero}[1]{{#1}_{\scriptscriptstyle{(0)}}}
\newcommand{\one}[1]{{#1}_{\scriptscriptstyle{(1)}}}
\newcommand{\two}[1]{{#1}_{\scriptscriptstyle{(2)}}}

\newcommand{\three}[1]{{#1}_{\scriptscriptstyle{(3)}}}
\newcommand{\four}[1]{{#1}_{\scriptscriptstyle{(4)}}}

\newcommand{\mzero}[1]{{#1}_{\scriptscriptstyle{(0)}}}
\newcommand{\mone}[1]{{#1}_{\scriptscriptstyle{(-1)}}}
\newcommand{\mtwo}[1]{{#1}_{\scriptscriptstyle{(-2)}}}


\newcommand{\LL}{{L}}

\newcommand{\fS}{{\mathfrak{S}}{}}

\newcommand{\Ad}{\mathrm{Ad}}



\begin{document}

\title{\bf Deformation quantization of principal bundles\footnote{\normalsize{Based on joint work with Pierre
    Bieliavsky, Chiara Pagani and Alexander Schenkel}}\\[10pt]}

\author{
{\bf Paolo Aschieri}
\\[5pt]
 {\small Dipartimento di Scienze e Innovazione Tecnologica}\\
 {\small and INFN Torino, Gruppo collegato di Alessandria, }\\
{\small Universit{\`a} del Piemonte Orientale, 
Viale T.~Michel~11,~15121~Alessandria,~Italy.} \\ {\small {(e-mail: \texttt{aschieri@to.infn.it}})}}

\maketitle


\begin{abstract}
We outline how Drinfeld twist deformation techniques can be applied to
the deformation quantization of principal bundles into noncommutative
principal bundles, and more in general to the deformation of
Hopf-Galois extensions. First we twist deform the structure group in a quantum
group, and this leads to a deformation of  the fibers of the principal
bundle. Next we twist deform a subgroup of the
group of authomorphisms of the principal bundle, and this leads to  a
noncommutative base space. Considering both deformations we obtain
noncommutative principal bundles with noncommutative fiber and base
space as well.
\end{abstract}
\paragraph*{Keywords:} noncommutative geometry, noncommutative principal bundles, Hopf-Galois extensions, cocycle twisting

\paragraph*{MSC 2010:} 16T05, 16T15, 53D55, 81R50, 81R60



\section{Introduction}
There are many approaches to noncommutative geometry, one of this is
based on deformation quantization of the algebra of smooth functions on 
commutative manifolds: the usual pointwise product is there deformed
into a $\star$-product, and the corresponding noncommutative algebra is then
thought as the algebra of functions on a quantum (or noncommutative) manifold.
If we consider the algebra of function on a Lie group $L$ it is natural to
deform the product to a $\star$-product that is obtained via the action of
left and right invariant vector fields, hence the
$\star$-product is defined by elements of the Lie algebra $\mathfrak{l}$
of the Lie Group $L$; more precisely, following  Drinfeld \cite{Dri83} , by
a twist (or twisting element) $\F$
 that is a formal power series in a deformation
parameter $\hbar$ of elements in 
$U(\mathfrak{l})\otimes U(\mathfrak{l})$, where $U(\mathfrak{l})$ is the universal enveloping
algebra of the Lie algebra $\mathfrak{l}$.

Furthermore, if the group $L$ acts on a manifold $M$  we have an action of the
Lie algebra $\mathfrak{l}$ on the  algebra $B$ of smooth function on $M$,
then the action of the twist $\F$ defines a $\star$-product deformation
of $B$. Thus Drinfeld twist
deformation is a powerful method, based on first deforming a Lie group and then
 its representations. This method has been extended to deform vector
 bundles over $M$ that carry an action of a Lie group $L$ (i.e., to $L$-equivariant vector
 bundles), and in \cite{AS} to their differential geometry, leading in particular to
 a theory of arbitrary (i.e., not necessarily equivariant) connections
on $B$-bimodules  and on their tensor
products that generalizes the notion of bimodule connection introduced
in \cite{Mou, DV}.  (Vector bundles are here described by their sections that form a
 $B$-bimodule, $B$ being the algebra of functions on $M$). The construction is categorical, and in particular
commutative connections can be canonically quantized to noncommutative
connections.

Here we further extend these techniques
and provide a general deformation theory of principal bundles; we
refer to \cite{ABPS} for an exhaustive presentation that complements
the present one: here we first present a pedagogical and shorter route
to the notion of Hopf-Galois extension (that captures the algebraic
aspects of  principality of a bundle)  and then lead the reader
through the basic key points and proofs of
the general deformation theory. 
As we explain, $G$-principal bundles are described in terms of $G$-equivariant maps
beween $A$-bimodules where now $A$ is the algebra of functions on the total space of the $G$-principal bundle.
When the Lie group $L$ used for the Drinfeld twist deformation is $G$
itself, then a corresponding Drinfeld  twist $\F$
deforms the fibers of the pincipal bundle, when the Lie group $L$ is not $G$ but a subgroup of the
group of authomorphims of the $G$-principal bundle then we obtain
twist deformations of the base space. In general we have $A$-bimodules
that carry both and action of the structure group $G$ as well as of a
subgroup $L$ of  the group of authomorphisms. We can therefore
consider  Drinfeld twists associated with $G$ as well as with $L$ and
thus obtain noncommutative bundles with both noncommutative fibers and base space.

The categorical setting we develop is very promising in order to study
the notions of gauge group in noncommutative geometry and that of
connection on noncommutative principal bundle. Indeed these forthcoming projects are main
motivations for the present study. In particular, gauge transformations
in noncommutative geometry are typically $GL(n)$ or $U(N)$ valued,
while we foresee the gauge group of a twist deformed $G$-principal bundle
to give twist deformed $G$-valued gauge transformations (in the spirit of
\cite{ADMSW}). This would allow to consider gauge theories with
arbitrary twist deformed gauge groups, not just $GL(n)$ or $U(N)$
ones. 
\sk
We further explain the content of the paper by outlining each chapter:
In \S 2 we show how the algebras $A$ and $B$ of functions on the total space
and on the base space of a principal bundle define a Hopf-Galois
extension $B\subset A$ that captures the algebraic aspects of the principal bundle. 
Thus noncommutative principal bundles are described by
noncommutative Hopf-Galois extensions is the same way that 
noncommutative manifolds are described by  noncommutative algebras of
smooth functions.
In \S 3 we  recall the theory of Drinfeld twist deformation in the dual
language, used throughout the paper, of twist deformation by 2-cocycles.
In \S 4 we consider twist deformations of Hopf Galois extensions: in
\S 4.1 Hopf-Galois extensions with Hopf algebra $H$ (for example corresponding to
principal bundles with structure group $G$)  are twist deformed in new
Hopf-Galois extensions with twisted Hopf algebra $H_\cot$ (corresponding to noncommutative
principal bundles with a quantum group $G_\cot$ and noncommutative
fibers). In \S 4.2 we consider twist deformations of the base space.
We also recover, as a relevant example of the general theory, the instanton
bundle on the noncommutative $4$-sphere $S^4_\theta$ of Connes-Landi.
In this case the total space, base space and structure group are
affine algebraic varieties, so that the $\star$-products obtained by
Drinfeld twist deformation are well defined (on the algebras of coordinate functions
on these varieties) also when the formal
deformation parameter $\hbar$ (called $\theta$) becomes nonformal and
is valued in ${\mathbb R}$.
In \S 4.3 we consider both base and fiber twist deformations and present the example of
formal deformations of $G$-principal bundles.
We conclude outlining the noncommutative deformation of the frame bundle of
a Lorentzian manifold that is the first step to  a
global geometric study of a noncommutative theory of gravity in the vierbein
formulation.
\sk

\noindent {\bf Acknowledgments}\\
The present contribution to the volume in memory of Professor Mauro
Francaviglia would not have been possible without his teaching and enthusiasm 
that determined my scientific background and interests as a student during his Lezioni di Meccanica
Razionale. Indeed it was in his course, and in his book
\cite{Mauro}, that I was introduced to
the theory of fiber bundles and to geometric methods in theoretical
physics. More recently I profited from the many
discussions on gravity and on noncommutative geometry, and from his support on
these studies
because of their potential in providing a more general (noncommutative spacetime) setting for the
formulation of gauge and gravity theories.

The author is member of the COST Action MP1405 QSPACE, supported by
COST (European Cooperation in Science and Technology) and is
affiliated to INdAM, GNFM (Istituto Nazionale di Alta Matematica,
Gruppo Nazionale di Fisica Matematica).

\section{From principal bundles to Hopf-Galois extensions}
We briefly recall the definition of principal bundle
(cf. \cite{Husemoller}, and also \cite{tok-notes}) presenting it in a form readily generalizable
to the noncommutative case. Replacing manifolds (algebraic varieties) with their algebras of
(coordinate) functions we arrive at the definition of Hopf-Galois
extension. Then it is shown how Hopf-Galois extensions are understood
in the category of $A$-bimodules that are also $H$-comodules
($A$-bimodules that are $G$-equivariant). 

\sk
We recall that given a topological group $G$, a topological space $E$
is a $G$-space if  there is a continuous map $E\times G\to G, (e,g)\mapsto eg$
that is a right action of the group $G$ on $E$, i.e, $e(gg)'=(eg)g'$,
$e1_G=e$ for all $e\in E$ and $g,g'\in G$, where $1_G$ is the identity in $G$.

A $G$-{bundle} $E\to M$ is then a  bundle $\pi: E\to M$
as well as a  $G$-space $E$, these two structures being
compatible, i.e., the $G$-action being fiber preserving:
$\pi(eg)=\pi(e)$. 
In this case the projection $\pi: E\to M$ is canonically induced on the
quotient  $E/G\to M$. It is then natural to further ask $E/G\to M$ to
be an homeomorphism. Let's now consider the case where the $G$-action
$E\times G\to E$ is free (i.e., if $eg=e$ then $g=\1_G$) and the
induced map $E/G\to M$ is indeed an homeomorphism.
Freeness of the action then implies 
injectivity of the map
\begin{eqnarray}\label{principal}
F: E\times G&\longrightarrow& E\times_M E\nn\\
(e,g)&\longmapsto& (e,eg)
\end{eqnarray}
where $E\times_M E=\{(e,e')\in E\times E;
\pi(e)=\pi(e')\}$, (the map $F$ is well defined since the $G$-action is
fiber preserving).
The map is furthermore surjective  because $M\simeq E/G$ implies that if
$\pi(e)=\pi(e')$ then there exists an element $g\in G$ such that $e=e'g$.
Continuity of $F$ follows from that of the $G$-action (we assume
$E\times_{M}E$ closed in $E\times E$), requiring the continuous
bijection $F$  to be a homeomorphism
we hence arrive at 
\begin{defi}
A principal $G$-bundle $(E, M, \pi, G)$ is a $G$-bundle $\pi: E\to M$
where the induced map $E/G\to M$ as well as the map 
$F$ in (\ref{principal}) are homeomorphisms. 
\end{defi}

Consider now the principal bundle $(E, M, \pi, G)$ where $E$
and $M$ are affine algebraic varieties, $G$ is an affine algebraic
group (e.g. $GL(n)$, $SL(n)$, $O(n)$, $SO(n)$,...) and $M=E/G$.
Denote by $H={\mathcal O}(G)$, $A={\mathcal O}(E)$, $B={\mathcal O}(M)$
the coordinate rings of the corresponding complex valued  algebraic
functions.  Then ${\mathcal O}(G\times G)\simeq{\mathcal O}(G)\otimes {\mathcal
  O}(G)$ so that $H={\mathcal O}(G)$ is a Hopf algebra with coproduct,
counit and antipode respectively defined by, for $g,g'\in G$,
\begin{eqnarray}
\Delta:  H\to H\otimes H&,&~\Delta(h)(g,g')=h(gg')~,\nn\\
\epsilon: H\to {\mathbb C}&,&~\epsilon(h)(g)=h(1_G)~,\nn\\
S:H\to H&,&~S(h)(g)=h(g^{-1})~\nn.
\end{eqnarray}
Similarly, since ${\mathcal O}(E\times G)\simeq  {\mathcal O}(E)\otimes {\mathcal O}(G)$, we have
that the right $G$-action on $E$ pulls-back to a right $H$-coaction
$\delta^A : A\to A\otimes H$ that is also an algebra map:
$\delta^A(aa')=\delta^A(a)\delta^A(a')$ (with $(a\otimes h)(a'\otimes
h')=aa'\otimes hh'$ for all $a\otimes h, a'\otimes h'\in A\otimes H$).
Furthermore $B=\mathcal{O}(M)={\mathcal O}(E/G)$ is the subalgebra of
functions on $E$ that are constants on the fibers, i.e. $B=\{a\in A; a(eg)=a(e),$ for all $e\in E, g\in
G\}$, or equivalently, it is
the subalgebra of coinvariant elements under the coaction $\delta^A :
A\to A\otimes H$, i.e., 
\begin{equation}
B =A^{coH}=\{b\in A \,|~
\delta^A(b)=b\otimes 1\}~.
\end{equation} 
Finally we also have  $A\otimes_B A\simeq {\mathcal O}(E\times_{E/G}
E)$ 
where  $\otimes_B$ is the tensor product over the
algebra $B$, and that the algebraic structure of the principal
$G$-bundle $E\to E/G$, i.e., bijectivity of the map $F$,  is 
equivalently captured by the bijectivity of the pull back of  $F$. \sk

The above construction is formalized and generalized to the
noncommutative case in the definitions that follows.  
Let $\bbK$ denote the field of complex numbers ${\mathbb C}$, or the the ring of
formal power series ${\mathbb C}[[\hbar]]$; with slight abuse of
notation a $\bbK$-module will be simply called a vector space or
linear space. 
\begin{defi}\label{comoddef}
Let $H$ be a Hopf-algebra.
A right $H$-\textit{comodule} is a vector space $V$ with a linear map
$\delta^V:V\to V\otimes H$ (called a right $H$-coaction) such that 
\be \label{eqn:Hcomodule}
(\id\otimes \Delta)\circ \delta^V = (\delta^V\otimes \id)\circ \delta^V~,\quad 
(\id\otimes \varepsilon) \circ \delta^V =\id~. 
\ee
\end{defi}
The coaction on an element
$v\in V$ is written in Sweedler notation as $\delta^V(v) =
\zero{v}\otimes \one{v}$ (sum understood), so that, for all $v\in V$,
$(\id\otimes \Delta)\circ \delta^V (v)= (\delta^V\otimes \id)\circ \delta^V(v)=\zero{v} \ot\one{v} \ot \two{v} $
and $\zero{v} \,\varepsilon (\one{v}) = v$.
A morphism $\psi: V\to W$ of $H$-comodules is a linear map compatible 
with the $H$-coactions:
\begin{equation}
\delta^W(\psi(v))=(\psi\otimes\id)\,\delta^V(v)~,
\end{equation} 
 for all $v\in V$. We denote by ${\cal
  M}^H$ the category of $H$-comodules.

\begin{defi}
A (right) $H$-{\bf comodule algebra} $A$ is 
a right $H$-comodule $A$ that is also an algebra  (unital
and associative, possibly noncommutative), with  the two
structures that are compatible, i.e., for all $a,a^\prime\in A \, $,
\be
\delta^A(a\,a^\prime) =\delta^A(a)\,\delta^A(a')~~\;,\quad
\delta^A(\1_A) = \1_A\otimes \1_H~\; .
\ee
 (where $A\otimes H$ has the tensor
product algebra structure).
\end{defi}
\begin{defi} \label{def:hg}
Let $H$ be a Hopf algebra with invertible antipode, and $A$ an $H$-comodule algebra.
Let $B\subset A$ be the subalgebra of coinvariants, i.e.,
$
B:= A^{coH}=\big\{b\in A ~|~ \delta^A (b) = b \ot \1_H \big\}~.
$
The map 
\begin{eqnarray}\label{can}  \can : A \otimes_B A &\longrightarrow& A
\ot H~,\\a\ot_B a' &\longmapsto& a _{}a'_{\;(0)} \ot a'_{\;(1)} \nn
\end{eqnarray} 
is called the \textit{canonical map}. 
The extension $B\subset A$ is an $H$-\textbf{Hopf-Galois extension} if the
canonical map is bijective. 
\end{defi}

In order to study the properties of the canonical map we have to study
tensor products of $H$-comodules. Given right $H$-comodules
$V$ and $W$,  the tensor product
$V\otimes W$ is an $H$-comodule with  the right $H$-coaction  
\begin{eqnarray}\label{deltaVW}
 \delta^{V\otimes W} :V\otimes W & \longrightarrow & V\otimes W\otimes H~,\\
 v\otimes w & \longmapsto & \zero{v}\otimes \zero{w} \otimes 
 \one{v}\one{w} ~.\nn
\end{eqnarray}
With this tensor product, $H$-comodules form a monoidal category (the
unit object being $\bbK$).
In particular $A\otimes A$ is a right $H$-comodule, and this structure
is induced to the  quotient  $A\otimes_B A$. 
The relevant $H$-comodule structure on $A\otimes H$ is obtained by
considering the $H$-adjoint coaction on $H$ itself: we
denote by ${\underline H}$ the $H$-comodule that equals $H$ as vector
space and that has right $H$-adjoint coaction 
\be\label{adj}
\delta^{\underline{H}}=\mathrm{Ad} : \underline{H}\longrightarrow \underline{H}\ot H 
~,~~h \longmapsto \two{h}\otimes S(\one{h})\,\three{h} ~,
\ee
(the notation $\underline{H}$ is 
in order to distinguish this structure from the Hopf algebra
structure).
The tensor product of $H$-comodules
$A\otimes \underline{H}$ is an $H$-comodule with 
right $H$-coaction $\delta^{A\otimes \underline{H}}: A\ot
\underline{H}\to A\ot \underline{H}\ot H$ given by
(cf.\ \eqref{deltaVW}), for all $ a\in A,~ h \in \underline{H}$,
\be\label{AHcoact}
\delta^{A\otimes \underline{H}}(a\otimes h) = \zero{a}\otimes \two{h} \otimes \one{a}\,S(\one{h})\, \three{h} \in A \ot \underline{H} \ot H~.
\ee 
The $H$-comodules $A\otimes_B A$ and $A\otimes \underline{H}$ are
furthermore trivially
left $A$-modules, where the left $A$-action  is just 
multiplication from the left on the first component of the tensor
product; they are also right $A$-modules: the right $A$-action on
$A\otimes_B A$  is just multiplication from the right  on the second
component, while on $A\otimes \underline{H}$ the right $A$-action is
given by
\begin{eqnarray}\label{trl_ot}
\trl_{A \ot \underline{H}}: A \ot \underline{H} \ot A &\longrightarrow & A \ot \underline{H}~, \\
a \ot h \ot c \;&\longmapsto &  a\zero{c} \otimes h\one{c} ~ .\nonumber
\end{eqnarray}
The left and right $A$-actions are compatible (commute) so that $A\otimes_B A$
and $A\otimes \underline{H}$ are $A$-bimodules. These $A$-actions are also
compatible with the $H$-coaction,
explicitly, an $H$-comodule$V$ has a compatible $A$-bimodule
structure (where $A$ is an $H$-module algebra) if, for all $a\in A$ and $v\in V$,
\begin{eqnarray}\label{eqn:modHcov} 
\zero{(a \tra_V v)} \ot \one{(a \tra_V v)} &=& \zero{a} \tra_V \zero{v} \ot \one{a}\one{v} ~,\\ 
\zero{(v \trl_V a)} \ot \one{(v \trl_V a)} &=& \zero{v} \trl_V \zero{a} \ot \one{v}\one{a} ~ 
\end{eqnarray}
were $\tra_V$ and $\trl_V$ denote the left and right $A$-actions on
$V$. By definition an {\bf $(H,A)$-relative Hopf module} is an 
$H$-comodule that has a compatible  $A$-bimodule structure.

We denote the category of
$(H,A)$-relative Hopf modules by ${_A{\cal M}_A}^{\!H}$; morphisms in
this category are morphisms of
right $H$-comodules  which are also morphisms of $A$-bimodules.
We have just seen that $A \otimes_B A $ and $A \ot \underline{H}$ are
$(H,A)$-relative Hopf modules, 

\begin{prop}\label{prop_canMorph}  
The canonical map
 $\can  : A \otimes_B A \ra A \ot \underline{H}$
is a morphism of $(H,A)$-relative Hopf modules.  \end{prop}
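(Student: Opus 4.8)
The plan is to verify, one at a time, the three properties that characterise a morphism of $(H,A)$-relative Hopf modules: that $\can$ is a map of left $A$-modules, of right $A$-modules, and of right $H$-comodules. Throughout I would evaluate everything on a generic element $a\ot_B a'$ and work in Sweedler notation, taking for granted that $\can$ is already well defined on $A\ot_B A$ (which follows from $\delta^A(b)=b\ot\1_H$ for $b\in B$).

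Left $A$-linearity is immediate, since on both $A\ot_B A$ and $A\ot\underline H$ the left action is multiplication on the first leg: for $c\in A$ one has $\can(c\,a\ot_B a')=c\,a\,\zero{a'}\ot\one{a'}=c\,\can(a\ot_B a')$. Right $A$-linearity is where the algebra-map property of $\delta^A$ first enters. Computing $\can\big((a\ot_B a')\trl c\big)=\can(a\ot_B a'c)=a\,\zero{(a'c)}\ot\one{(a'c)}=a\,\zero{a'}\zero{c}\ot\one{a'}\one{c}$ and comparing with $\can(a\ot_B a')\trl_{A\ot\underline H}c=a\,\zero{a'}\zero{c}\ot\one{a'}\one{c}$, where the right-hand side is read off from the definition \eqref{trl_ot} of the $A$-action on $A\ot\underline H$, shows the two agree.

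The hard part is the $H$-comodule condition $\delta^{A\ot\underline H}\circ\can=(\can\ot\id)\circ\delta^{A\ot_B A}$. For the right-hand side I would apply the tensor-product coaction \eqref{deltaVW} and then $\can$, and use the comodule axiom \eqref{eqn:Hcomodule} to organise the Sweedler legs of $a'$ into a single iterated coaction; this yields $\zero{a}\,\zero{a'}\ot\one{a'}\ot\one{a}\two{a'}$. For the left-hand side I would first apply $\can$ and then the coaction \eqref{AHcoact} on $A\ot\underline H$, which carries the adjoint coaction \eqref{adj} on the $H$-leg. Here the algebra-map property of $\delta^A$ is needed to expand $\delta^A(a\,\zero{a'})$, while coassociativity together with the comodule axiom is used to regroup all legs of $a'$ into one iterated coaction.

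The key simplification, and the main obstacle, is handling the antipode factor produced by the adjoint coaction. After the regrouping, the relevant legs of $a'$ appear in the pattern $\one{a'}\,S(\two{a'})$, and the antipode axiom $\one{h}S(\two{h})=\varepsilon(h)\1_H$ collapses this to $\1_H$ up to a counit; applying the counit property of the coaction then removes the spurious leg and leaves exactly $\zero{a}\,\zero{a'}\ot\one{a'}\ot\one{a}\two{a'}$, matching the right-hand side. The only real care required is the bookkeeping of Sweedler indices through the adjoint coaction \eqref{adj}; everything else is direct substitution.
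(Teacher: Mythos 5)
Your proposal is correct and follows essentially the same route as the paper's proof: the $H$-comodule compatibility is established by the identical Sweedler computation, expanding $\delta^{A\ot \underline{H}}(a\,\zero{a'}\ot\one{a'})$ via the algebra-map property of $\delta^A$ and the adjoint coaction, and then collapsing the resulting factor $\one{a'}S(\two{a'})$ with the antipode axiom to recover $(\can\ot\id)\circ\delta^{A\ot_B A}$. The only difference is presentational: the paper declares left and right $A$-linearity ``immediate'' while you write out the (correct) one-line verifications, using the algebra-map property of $\delta^A$ and the action \eqref{trl_ot} for the right-module case.
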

\begin{proof}
We show that the canonical map is a morphism of right $H$-comodules,
for all $a,a'\in A$,
\begin{flalign}
\nonumber \delta^{A\otimes \underline{H}}\big(\can(a\otimes_B a^\prime)\big)&= \delta^{A\otimes \underline{H}}(a\, \zero{a^\prime}\otimes \one{a^\prime})
= \zero{a}\zero{a'} \otimes \three{a'} \ot \one{a} \one{a^\prime} S(\two{a^\prime} ) \four{a^\prime} \\
\nonumber &=
\zero{a}\zero{a'} \otimes \one{a'} \ot \one{a} \two{a^\prime} 
=
(\can\otimes\id)\left( \left(\zero{a} \ot_B \zero{a'}\right) \otimes \one{a}  \one{a'}  \right)
\\
&=
(\can\otimes\id)\big(\delta^{A\otimes_B A}(a\otimes_B a^\prime)\big)~.\nn
\end{flalign}
It is immediate to see that $\can$ is a morphism of left and right $A$-modules. 
\end{proof}

\begin{ex}\label{algebraicpb} 
Let as before $(E, M, \pi, G)$ be a principal bundle
where $E$ and $M$ are affine algebraic varieties, $G$ is an affine
algebraic group and  $M=E/G$.  Let furthermore $E'$ be an affine
 algebraic variety and a $G$-space.
The tensor product of $H$-comodules (\ref{deltaVW})  corresponds to
the cartesian product $E\times E'$ that is a $G$-space with the diagonal $G$-action
$(e,e')g=(eg,e'g)$. 
The right $G$-adjoint action on $G$ pulls back to the right adjoint $H$-coaction 
$\delta^{\underline H}=\Ad$ on 
${\underline H}$, see  (\ref{adj}).
Proof: $(\two{h}\otimes 
S(\one{h})\,\three{h}) (g,g')=\two{h}(g)S(\one{h})(g')\three{h}(g')=
\two{h}(g)\one{h}({g'}^{-1})\three{h}(g')=h(g'^{-1}gg')$, for all
$g,g'\in G, h\in H$.
The map $F: E\times G\to E\times_M E$ in (\ref{principal}) is compatible with the diagonal right
$G$-actions $(E\times G)\times G\to (E\times G)\;,~(e,g)g'=(eg',
g'^{-1}gg')$ and $(E\times_{E/G} E)\times G\to (E\times_{E/G} E)~,~~(e,e')g'=(eg',
eg')$, i.e., it is $G$-equivariant\footnote{The map $F$ is also compatible with
the $G$-actions $(E\times G)\times G\to (E\times G)~,~~(e,g)\cdot g'=(e,
gg')$ and  
$(E\times_{E/G} E)\times G\to (E\times_{E/G} E)~,~~(e,e')\cdot g'=(e,
eg')$. For a trivial bunde $E\simeq M\times G$ it is equivalent to
state compatibility of $F$ with respect to these
actions or to the actions defined in the main text. Indeed the
isomorphism $(M\times G)\times G\to (M\times G)\times G$, $(m,\tilde g,
g)\mapsto (m,\tilde g, \tilde g g)$ intertwines these two actions
(i.e., it is $G$-equivariant).}:
$(F(e,g))g' =F((e,g)g')$, so that its pull back $\can : A \otimes_B A \ra A \ot \underline{H}$ is an
$H$-comodule map. Furthermore the $A$-bimodule structure on $A\otimes
H$ corresponds to the maps $E\times G\to E\times E\times G$, $(e,g)\mapsto
(e,e,g)$, and $E\times G\to E\times G\times E$, $(e,g)\mapsto
(e,g,eg)$; similarly, the $A$-bimodule structure on $A\otimes_{B}
A$ corresponds to the diagonal maps $E\times_{E/G}E\to E\times_{E/G}
E\times_{E/G}E$, $(e,e')\mapsto (e,e,e')$ and $E\times_{E/G}E\to E\times_{E/G}
E\times_{E/G}E$, $(e,e')\mapsto (e,e',e')$ for all $e,e'\in E$ with
$\pi(e)=\pi(e')$. Compatibility of the map $F$ with these maps implies 
that  $\can : A \otimes_B A \ra A \ot \underline{H}$ is an
$(H,A)$-relative Hopf module map. Of course this result follows from Proposition
\ref{prop_canMorph}, however we have here derived it from the
geometric properties of the map $F : E\times G\to E\times_M E$, thus providing  geometric
intuition for its pull back $\can : A \otimes_B A \ra A \ot \underline{H}$.
\end{ex}
\begin{ex}\label{ex:principalbundle} (Fr{\'e}chet Hopf-Galois extension).
Let  $(E, M, \pi, G)$ be a principal bundle in the smooth category
($E$ and $M$ are smooth manifolds, $G$ is a a Lie group) and $M=E/G$.
The space of smooth functions $C^\infty(E)$ 
is a (nuclear) Fr{\'e}chet space with respect to the usual smooth topology. 
It is furthermore a unital Fr{\'e}chet algebra with (continuous) product
$m:= \mathrm{diag}_E^\ast : A\, \widehat{\otimes}\, A \to A$, where $A\, \widehat{\otimes}\, A\simeq C^\infty(P\times P)$
denotes the completed tensor product.
Similarly $H =C^\infty(G)$ 
is a Fr{\'e}chet Hopf algebra, i.e., a Hopf algebra were product, 
antipode, counit and coproduct $\Delta : H\to
H\widehat{\ot} H$ are continuous maps.
The right $G$-action $E\times G\to E$ pulls back to a continuous right $H$-coaction $\delta^A : A \to A\,
{\widehat{\otimes}}\, H$, so that $A$ becomes a  Fr{\'e}chet $H$-comodule algebra.
The $H$-coinvariant subalgebra is
$B=A^{coH}=C^\infty(E/G)=C^\infty(M)$, and
the canonical map is the pull back of the isomorphism $F : E\times
G\to E\times_M E$ in
(\ref{principal}),\footnote{
The topological tensor product over $B$ is the quotient $A\,\widehat{\otimes}_B\, A := A\,\widehat{\otimes} \,A / \,\overline{\mathrm{Im}(m\widehat{\otimes}\id - \id\,\widehat{\otimes}\,m)}$,
where 
$m \,\widehat{\otimes}\,\id$ and $\id\,\widehat{\otimes} \, m$ are
maps from $A\,\widehat{\otimes} \,B \,\widehat{\otimes} \,A$
to $A\,\widehat{\otimes}\,A$ that  describe the right and
respectively left action of $B$ on $A$, and where the overline
$\overline{\phantom{pp}}$
denotes the closure in the Fr{\'e}chet space $A\,\widehat{\otimes}\,A$.
It can be shown that $A\,\widehat{\otimes}_{B}\, A \simeq C^\infty(P\times_M P)$.
} 
$\can : A\,\widehat{\otimes}_B\, A \to A\,\widehat{\otimes}\, {H}$,
hence 
$B\subset A$ is a Fr{\'e}chet $H$-Hopf-Galois extension.  
As in the previous example the right $G$-adjoint action  on $G$ pulls
back to a right $H$-coaction on $\underline{H}$, so that
$\underline{H}$ is a Fr{\'e}chet $H$-comodule; furthermore
$A\,\widehat{\otimes}_B\, A$ and $A\,\widehat{\otimes}\,
\underline{H}$ are Fr{\'e}chet $(H,A)$-relative Hopf modules and the canonical map
$\can : A\,\widehat{\otimes}_B\, A \to A\,\widehat{\otimes}\,
\underline{H}$ is a homeomorphisms of Fr{\'e}chet $(H,A)$-relative
Hopf modules.
\end{ex}
\sk

We conclude this section recalling that an $H$-Hopf-Galois extension
$B:=A^{coH}\subset A$ is said to be trivial (or to have the
normal basis property or to be cleft) if there exists an
isomorphism  $A \simeq B \ot H$ of left $B$-modules and right $H$-comodules (where
$B \ot H$ is a left $B$-module via $m_B \ot \id$ and a right
$H$-comodule via $\id \ot \Delta$). This condition captures the
algebraic aspect of triviality of a principal bundle.

We have recalled that both topological and algebraic structures combine in the definition 
of  principal $G$-bundle. In the definition of
Hopf-Galois extension we have implemented the algebraic properties of
a principal bundle, considering their richer structure of topological spaces leads to a
refinement of the notion of Hopf-Galois extension,
\begin{defi}  \label{def:pHcomodalg} 
Let $H$ be a Hopf algebra with invertible  antipode over a field $\bbK$. A
{\bf principal $H$-comodule algebra} $A$ is an $H$-comodule algebra
$A$ sucht that 
$B:=A^{coH}\subset A$ is an $H$-Hopf-Galois extension 
and  $A$ is equivariantly projective as a left $B$-module, i.e.\ there exists a left $B$-module and right $H$-comodule morphism $s: A\to
B\ot A$ that is a section of the (restricted) product $m: B\ot A\to A$,
i.e.\ such that $m\circ s=\id_A$.
\end{defi}  

The condition of equivariant projectivity of $A$ is equivalent to that of faithful
flatness of $A$ \cite{SchSch} (we assume the antipode of $H$ is
invertible). From the characterization of faithfully flat
extensions \cite{Sch} it follows that
if $H$ is cosemisimple  then surjectivity of the canonical map is
sufficient to prove its bijectivity and principality of $A$. 

\section{Drinfeld twists and 2-cocycles deformations}\label{sec:twists}
We first recall the notion of $2$-cocycle \cite{doi} and the dual notion of
Drinfeld twist \cite{Dri83}. We then review Hopf algebra deformations via
$2$-cocycles and
present the corresponding
deformations of $H$-comodules, $H$-comodule algebras $A$, and
$(H,A)$-relative Hopf-modules.

\subsection{\bf 2-Cocycles, twists and Hopf algebra deformations}\label{sec:twists-hopf}
Let $H$ be a Hopf algebra and recall that $H\ot H$ is canonically
a coalgebra with coproduct $\Delta_{H\ot H}(h\ot k)=\one{h}\ot
\one{k}\ot \two{h}\ot \two {k}$ and counit $\varepsilon_{H\otimes
  H}(h\otimes k)=\varepsilon(h)\varepsilon(k)$, for all $ h,k\in H$. In particular, we can
consider the convolution product of $\bbK$-linear maps $H\ot H\to \bbK$.
\begin{defi}
A linear map $\cot:H \otimes H \ra \bbK$ is called a
\textbf{2-cocycle}, provided that: \\
{\it i)}  it satisfies, for all $g,h, k \in H$, 
\be\label{lcocycle}
\co{\one{g}}{\one{h}} \co{\two{g}\two{h}}{k} =  \co{\one{h}}{\one{k}} \co{g}{\two{h}\two{k}}~,
\ee
{\it ii)} it is convolution invertible, i.e.,
there exists $\overline{\gamma}:H\otimes H\to \bbK$ such that
$\overline{\gamma}\ast
\gamma=\gamma\ast\overline{\gamma}=\varepsilon_{H\otimes H}$ (where the
convolution product explicitly reads
$\overline{\gamma}\ast\gamma(h\otimes
k)=\overline{\gamma}(\one{h}\otimes\one{k}){\gamma}(\two{h}\otimes\two{k})$),\\
{\it iii)} it is  {unital}, i.e.\
$\co{h}{\1}= \varepsilon(h) = \co{\1}{h}$, for all  $ h\in H$.
\end{defi}

\begin{rem}[Twists and 2-cocycles]\label{twistcoc}
Let $H'$ be another Hopf algebra, a  twist on $H'$ is an invertible  element $\F \in H' \ot H'$ such
that   $(\varepsilon_{H'} \ot \id) (\F)= \1= (\id \ot \varepsilon_{H'})(\F)$ and
\be\label{twist}
(\F \ot \1)[(\Delta_{H'} \ot \id)(\F)]= (\1 \ot \F)[(\id \ot \Delta_{H'}) (\F)] ~.
\ee
Let further  $H'$ and $H$ be dually paired Hopf algebras,
 with pairing  $\langle~,~ \rangle : H' \times H \ra \bbK$, i.e., for
 all $\xi,\zeta\in H'$ and $h,k\in H$ we have 
$\langle\xi\zeta , h\rangle=\langle \xi, \one{h}\rangle\langle \zeta,\two{h}\rangle$, $\langle\xi,hk\rangle=\langle\one{\xi},h\rangle\langle\two{\xi}
   ,k\rangle$, $\langle \xi, \1_H\rangle=\varepsilon_\U(\xi)$, $\langle \1_\U, h\rangle=\varepsilon_H(h)$.
 Then to each twist $\F=\f^\alpha \ot \f_\alpha \in H' \ot H'$ (sum
 over $\alpha$ understood) there corresponds a 2-cocycle $\cot_\F : H \otimes H \ra \bbK$ on $H$ defined by 
\be\label{tw2coc}
\cot_\F(h \ot k) := \langle \F, h\otimes k\rangle=\langle \f^\alpha ,h \rangle \, \langle \f_\alpha , k\rangle ~,
\ee
for all  $h,k\in H$.
The 2-cocycle condition  for $\cot_\F$ follows from the  twist
condition for $\F$ and similarly the remaining properties {\it ii)} and
{\it iii)} of $\cot$ follow from invertibility of $\F$ and its
normalization  $(\varepsilon_{H'} \ot \id) (\F)= \1= (\id \ot \varepsilon_{H'})(\F)$.

Examples of dually paired Hopf algebras are the Hopf algebra
$H={\mathcal O}(G)$ of an affine algebraic group $G$ and the universal
enveloping algebra  $U(\mathfrak{g})$ of its Lie algebra
$\mathfrak{g}$. A Lie algebra element $v\in \mathfrak{g}$ is equivalently a left invariant
vector field $v$ on $G$ and the pairing with a function $f\in
{\mathcal O}(G)$ is given by applying the vector field to the function
and then evaluating at the unit element $1_G$ of the group: $\langle v, f\rangle =v(f)|_{1_G}$.
The pairing is then extended to all $U(\mathfrak{g})$  using the
coproduct of ${\mathcal O}(G)$ and by linearity. Twists associated with $U(\mathfrak{g})$ were
studied by Drinfeld (in the form of formal power series, cf.~also Example \ref{FDQ}) and as
outlined in this remark lead to $2$-cocycles on $H={\mathcal
  O}(G)$. 
\end{rem}

\begin{prop}\label{prop:co}
Let $\cot: H\otimes H\to \bbK$ be a  2-cocycle. Then 
\be\label{hopf-twist}
m_{\cot} (h \ot k):= h \mt k:= \co{\one{h}}{\one{k}} \,\two{h}\two{k}\, \coin{\three{h}}{\three{k}}~,
\ee
for all $h,k\in H$, 
defines a new associative product on $H$. The resulting algebra 
$\hg:=(H,m_\cot,\1_H)$ is a Hopf algebra when endowed with the unchanged coproduct $\Delta$ and 
counit $\varepsilon$ and with the new antipode  $S_\cot:= u_\cot *S
*\bar{u}_\cot$, where $u_\cot:  H\rightarrow \bbK , ~ h\mapsto
\co{\one{h}}{S(\two{h})}$, with inverse 
$\bar{u}_\cot: H\rightarrow \bbK,~ h \mapsto
\coin{S(\one{h})}{\two{h}}$.
 We call $H_\cot$ the twisted Hopf algebra of $H$ by $\cot$. 
\end{prop}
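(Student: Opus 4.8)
The plan is to establish, in order: (i) that $m_\cot$ is associative with two-sided unit $\1_H$; (ii) that the unchanged $\Delta$ and $\varepsilon$ make $\hg$ a bialgebra; and (iii) that $S_\cot$ is an antipode for it. The only inputs are the three defining properties of the $2$-cocycle $\cot$ together with the Hopf axioms of $H$. Before starting I would record two facts about the convolution inverse $\bar\cot$. From the unitality of $\cot$ and the identities $\cot*\bar\cot=\varepsilon_{H\ot H}=\bar\cot*\cot$ one reads off at once $\coin{\1}{h}=\varepsilon(h)=\coin{h}{\1}$. Next, reading \eqref{lcocycle} as an equality in the convolution algebra $\Hom(H^{\ot 3},\bbK)$, in which each side is a product of two convolution-invertible maps, and inverting, one obtains the companion identity
\be
\coin{g}{\one{h}\one{k}}\,\coin{\two{h}}{\two{k}} = \coin{\one{g}\one{h}}{k}\,\coin{\two{g}}{\two{h}}~,
\ee
for all $g,h,k\in H$; this is the form of the cocycle condition needed to transport $\bar\cot$ across products.

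Associativity is then a direct Sweedler computation. Expanding $(h\mt k)\mt l$ and $h\mt(k\mt l)$, and using that $\Delta$ is an algebra map for the undeformed product to split $\Delta(\two{h}\,\two{k})$ and $\Delta(\two{k}\,\two{l})$, each side reduces to two $\cot$-factors, an element of $H$, and two $\bar\cot$-factors. The two $\cot$-factors are matched across the two sides by \eqref{lcocycle} and the two $\bar\cot$-factors by its companion identity, after which the middle elements agree; the only labour is the bookkeeping of Sweedler legs. Unitality is immediate: using property \textit{iii)} of $\cot$ and the unitality of $\bar\cot$ just recorded, $\1_H\mt h=\co{\1}{\one{h}}\,\two{h}\,\coin{\1}{\three{h}}=\varepsilon(\one{h})\,\two{h}\,\varepsilon(\three{h})=h$, and symmetrically $h\mt\1_H=h$.

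Since the coproduct and counit are literally those of $H$, coassociativity and counitality hold automatically, and it remains to check their compatibility with $m_\cot$. For the counit, $\varepsilon(h\mt k)=\co{\one{h}}{\one{k}}\,\varepsilon(\two{h}\,\two{k})\,\coin{\three{h}}{\three{k}}=(\cot*\bar\cot)(h\ot k)=\varepsilon(h)\varepsilon(k)$. For the coproduct, expanding $\Delta(h)\mt\Delta(k)$ in the tensor-square algebra $\hg\ot\hg$ produces four cocycle scalars; the inner pair, a $\bar\cot$ followed by a $\cot$ on two adjacent legs, collapses through $\bar\cot*\cot=\varepsilon_{H\ot H}$ to counits, while the surviving outer $\cot$ and $\bar\cot$ reproduce exactly $\Delta(h\mt k)=\co{\one{h}}{\one{k}}\,\Delta(\two{h}\,\two{k})\,\coin{\three{h}}{\three{k}}$. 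Hence $\Delta$ is multiplicative and $\hg$ is a bialgebra.

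The antipode is where the real work lies. I would first show that $u_\cot$ and $\bar u_\cot$ are mutually convolution inverse, $u_\cot*\bar u_\cot=\varepsilon=\bar u_\cot*u_\cot$, which follows from the cocycle condition together with the antipode axiom $\one{h}\,S(\two{h})=\varepsilon(h)\1_H=S(\one{h})\,\two{h}$ of $H$. Granting this, $S_\cot=u_\cot*S*\bar u_\cot$ is well defined and convolution invertible, and it remains to verify $\one{h}\mt S_\cot(\two{h})=\varepsilon(h)\1_H=S_\cot(\one{h})\mt\two{h}$ in $\hg$. The strategy is to insert $S_\cot(k)=u_\cot(\one{k})\,S(\two{k})\,\bar u_\cot(\three{k})$ together with the defining $\cot$ and $\bar\cot$ factors of $m_\cot$, and then to invoke \eqref{lcocycle} (respectively its companion identity) in order to fuse the $u_\cot$ (respectively $\bar u_\cot$) prefactor with the cocycle produced by the product. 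The crux, and the step I expect to be the main obstacle, is to apply the cocycle identities in exactly the order that makes every surviving $\cot$ appear as $\co{\one{x}}{S(\two{x})}$ and every surviving $\bar\cot$ as $\coin{S(\one{x})}{\two{x}}$, i.e.\ as $u_\cot$ and $\bar u_\cot$ evaluated on single legs; only in this combination do the antipode axioms of $H$ collapse the internal coproducts. The leftover scalars then telescope by unitality of $\cot$ and $\bar\cot$, and the expression reduces to $\varepsilon(h)\1_H$. Checking that this telescoping indeed closes is the heart of the argument.
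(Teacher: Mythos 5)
Your proposal cannot be matched against an in-paper argument, because the paper does not prove this proposition: it is quoted as a standard result with a reference to Doi \cite{doi}. Judged on its own terms, the parts you actually carry out are correct and are the standard ones. In particular, your derivation of the companion identity for $\bar\cot$, obtained by reading \eqref{lcocycle} in the convolution algebra $\Hom(H^{\ot 3},\bbK)$ and inverting (each factor is $\cot^{\pm 1}$ precomposed with a coalgebra map, hence convolution invertible), is sound, and with it the associativity, unit, counit and comultiplicativity checks go through exactly as you sketch them.

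The genuine gap is the antipode, which is precisely the nonroutine content of the proposition. Two things are asserted but never proven in your proposal: (a) that $u_\cot$ and $\bar u_\cot$ are mutually convolution inverse (you say this ``follows from the cocycle condition together with the antipode axiom'' but give no derivation), and (b) that $\one{h}\mt S_\cot(\two{h})=\varepsilon(h)\1_H=S_\cot(\one{h})\mt\two{h}$. For (b) you describe a strategy and then explicitly concede that checking the telescoping closes ``is the heart of the argument'' --- i.e.\ you flag, rather than fill, the hole. And it is a real hole: after inserting $S_\cot(\two{h})=u_\cot(\two{h})S(\three{h})\bar u_\cot(\four{h})$ into $m_\cot$, one faces a product of several $\cot$- and $\bar\cot$-factors on interleaved Sweedler legs, some with antipodes inside, and the plain cocycle identity \eqref{lcocycle} together with its companion does not directly fuse these into single-leg $u_\cot$'s and $\bar u_\cot$'s; one needs mixed cocycle--antipode identities of exactly the type of \eqref{abc} (which the paper itself does not prove but defers to \cite{ABPS}). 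So as it stands your argument establishes that $\hg$ is a bialgebra, not that it is a Hopf algebra. To close it, first prove $u_\cot*\bar u_\cot=\varepsilon=\bar u_\cot*u_\cot$ from \eqref{lcocycle} and the antipode axioms of $H$, then establish an intermediate identity such as \eqref{abc}, and only then run the substitution you outline; alternatively, follow the computation in \cite{doi}.
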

We refer for example to \cite{doi} for a proof of this standard result. 
Notice that
the twisted Hopf algebra $\hg$ can be `untwisted' by using the convolution inverse
$\bar\cot : H\otimes H\to\bbK$;
indeed, $\bar\cot$ is a 2-cocycle for $\hg$ and the twisted Hopf
algebra of  $\hg$ by $\bar\cot$ 
is isomorphic to $H$ via the identity map. 
Finally, among the identities satisfied by 2-cocycles we will later use 
\be\label{abc}
\co{g\one{h}}{S(\two{h})k} = \coin{\one{g}}{\one{h}} ~u_\cot(\two{h}) ~\coin{S(\three{h})}{\one{k}} ~
\co{\two{g}}{\two{k}}~,
\ee
for all $g,h,k\in H$, that is for example proven in \cite{ABPS}.

\subsection{\label{sec:rightHcomod}Twist deformation of right $H$-comodules}
Given a  2-cocycle $\cot: H\otimes H\to\bbK$ not only we have a new
Hopf algebra $H_\cot$ but also corresponding comodules. Indeed if
$V\in{\cal M}^H$ is a right $H$-comodule with coaction $\delta^V:V \ra V \ot H$, then $V$ with the
same coaction, but now thought of as a map with values in $V \ot H_\gamma$, is a right $\hg$-comodule.  
This  is the case simply because the Definition \ref{comoddef} of $H$-comodule only involves the coalgebra structure of $H$, and  
$\hg$ coincides with $H$ as a coalgebra. When considering  $V$ as an object in $\mathcal{M}^{\hg}$ we will denote it by 
$V_\cot$ and the coaction by $\delta^{V_\cot} : V_\cot \to V_\cot \otimes \hg$. 
Moreover, any morphism $\psi : V\to W $ in $ {\cal M}^H$
can be thought as a morphism $\psi  : V_\cot\to W_\cot$ in ${\cal M}^{H_\cot}$;
indeed, $H$-equivariance of $\psi: V\to W $ implies
$H_\cot$-equivariance of $\psi  : V_\cot\to W_\cot$
since by construction the right $H$-coaction in $V$ agrees with the right $H_\cot$-coaction
in $V_\cot$.
Hence we have a functor between the categories of right $H$-comodules
and of right  $H_\cot$-comodules,
\begin{equation}\label{functGamma}
\Gamma : {\cal M}^H \to{\cal M}^{H_\cot}~,
\end{equation}
defined by $\Gamma(V):=V_\cot$ and $\Gamma(\psi):=\psi :V_\cot\to W_\cot$.
Furthermore this functor $\Gamma$ induces an equivalence of categories 
because we can use the convolution inverse $\bar\cot$ in order to
twist back $H_\cot$ to $(H_\cot)_{\bar{\cot}}=H$ and $V_\cot$ to
$(V_\cot)_{\bar{\cot}}=V$.

\sk
We denote by 
$(\mathcal{M}^{\hg},\ot^\cot)$ the monoidal category corresponding to
the Hopf algebra $H_\cot$. 
Explicitly, for all objects  $V_\cot,W_\cot\in  \mathcal{M}^{\hg}$
(with coactions  $\delta^{V_\cot} : V_\cot \to V_\cot \otimes \hg$ and $\delta^{W_\cot} : W_\cot \to W_\cot \otimes\hg$), the right 
$H^\cot$-coaction on $V_\cot\ot^\cot W_\cot $,  according to \eqref{deltaVW}, is
given by 
\begin{eqnarray}\label{deltaVWcot}
\delta^{V_\cot \ot^\cot W_\cot} :V_\cot\otimes^\cot W_\cot &
                                                        \longrightarrow & V_\cot \otimes^\cot W_\cot\otimes H_\cot~,\\
 v\otimes^\cot w & \longmapsto & \zero{v}\otimes^\cot \zero{w} \otimes \one{v}\mt\one{w} ~.\nn
\end{eqnarray}
The equivalence between the
categories ${\cal M}^H$ and ${\cal M}^{H_\cot}$ extends to their monoidal structure:
\begin{thm}\label{thm:funct}
The functor $\Gamma : {\cal M}^H \to{\cal M}^{H_\cot}$ induces an equivalence
between the  monoidal categories $(\mathcal{M}^H, \ot)$ and $(\mathcal{M}^{\hg},
\ot^\cot)$ that 
is given by the isomorphisms
\begin{eqnarray}\label{nt}
\varphi_{V,W}: V_\cot \ot^\cot W_\cot &\longrightarrow&  (V \ot W)_\cot  ~,
\\
v \ot^\cot w &\longmapsto &  \zero{v} \ot \zero{w} ~\coin{\one{v}}{\one{w}} ~,\nn
\end{eqnarray}
in the category $\mathcal{M}^{\hg}$ of right $H_\cot$-comodules for all objects $V,W\in {\cal M}^H$.
\end{thm}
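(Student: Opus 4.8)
The plan is to upgrade the equivalence $\Gamma\colon\mathcal{M}^H\to\mathcal{M}^{\hg}$, already established above, to a \emph{monoidal} equivalence. Concretely, I would show that the maps $\varphi_{V,W}$ of \eqref{nt} assemble into a natural isomorphism making $\Gamma$ a strong monoidal functor; since the underlying functor is already an equivalence, this suffices to conclude a monoidal equivalence, with no further check of essential surjectivity or full faithfulness. The verification splits into four steps: (i) each $\varphi_{V,W}$ is a morphism in $\mathcal{M}^{\hg}$; (ii) it is invertible; (iii) it is natural in $V$ and $W$; (iv) the family satisfies the associativity and unit coherence axioms.

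For (i) I would check $\hg$-colinearity directly. On the target $(V\ot W)_\cot$ the coaction is the \emph{untwisted} coaction $v\ot w\mapsto\zero{v}\ot\zero{w}\ot\one{v}\one{w}$ of $V\ot W$, merely reread as landing in $(V\ot W)_\cot\ot\hg$, whereas on $V_\cot\ot^\cot W_\cot$ the coaction \eqref{deltaVWcot} carries the twisted product $\mt$ in its last leg. Pushing $\delta^{V_\cot\ot^\cot W_\cot}$ through $\varphi_{V,W}$ and expanding $\mt$ by \eqref{hopf-twist}, the scalar $\coin{\one{v}}{\one{w}}$ produced by $\varphi_{V,W}$ meets the leading factor $\co{\two{v}}{\two{w}}$ of $\mt$; the identity $\bar\cot\ast\cot=\varepsilon_{H\otimes H}$ then collapses these two innermost legs and turns the twisted product in the last leg back into the ordinary product, reproducing exactly $\delta^{(V\ot W)_\cot}\big(\varphi_{V,W}(v\ot^\cot w)\big)$.

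For (ii) I would exhibit the candidate inverse $(V\ot W)_\cot\to V_\cot\ot^\cot W_\cot$, $v\ot w\mapsto\zero{v}\ot^\cot\zero{w}\,\co{\one{v}}{\one{w}}$, and verify that both composites with $\varphi_{V,W}$ are the identity; after a single use of coassociativity each reduces to $\cot\ast\bar\cot=\bar\cot\ast\cot=\varepsilon_{H\otimes H}$. Step (iii) is then immediate: for morphisms $f\colon V\to V'$ and $g\colon W\to W'$ in $\mathcal{M}^H$ the scalar $\coin{\one{v}}{\one{w}}$ only sees the coaction legs, which $f$ and $g$ preserve by $H$-colinearity, so $\varphi_{V',W'}\circ(f\ot^\cot g)=(f\ot g)\circ\varphi_{V,W}$.

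The main obstacle is (iv), the associativity hexagon. As the underlying vector-space tensor products are strictly associative, I would evaluate the two composites $\varphi_{U\ot V,W}\circ(\varphi_{U,V}\ot^\cot\id)$ and $\varphi_{U,V\ot W}\circ(\id\ot^\cot\varphi_{V,W})$ on $u\ot^\cot v\ot^\cot w$. After separating the coaction legs by coassociativity, each yields $\zero{u}\ot\zero{v}\ot\zero{w}$ multiplied by a product of two $\bar\cot$-evaluations, namely $\coin{\one{u}\one{v}}{\one{w}}\,\coin{\two{u}}{\two{v}}$ for the first composite and $\coin{\one{u}}{\one{v}\one{w}}\,\coin{\two{v}}{\two{w}}$ for the second. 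Their equality is obtained from the 2-cocycle condition \eqref{lcocycle} by convolution-inverting both sides: embedding $\cot$ and $\bar\cot$ as cochains in $\Hom(H^{\otimes 3},\bbK)$ through the (algebra) face-embeddings, equation \eqref{lcocycle} becomes a product of two factors whose inverse, read off with $\bar\cot$, is precisely the required identity. Unit coherence is immediate from unitality: with $\bbK$ carrying the trivial coaction, $\coin{\one{v}}{\1}=\varepsilon(\one{v})$ and $\coin{\1}{\one{w}}=\varepsilon(\one{w})$, so $\varphi_{V,\bbK}$ and $\varphi_{\bbK,W}$ collapse to the canonical unit isomorphisms. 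Assembling (i)--(iv), $(\Gamma,\varphi)$ is a strong monoidal functor, and being an equivalence on underlying categories it is a monoidal equivalence.
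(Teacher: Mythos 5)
Your proposal is correct, and its computational core coincides with the paper's: your step (i) is exactly the paper's displayed calculation (apply $\varphi_{V,W}\ot\id$ to the twisted coaction, expand $\mt$ in the last leg, and collapse $\bar\cot\ast\cot=\varepsilon_{H\otimes H}$), and the explicit inverse in your step (ii) is precisely the paper's $\overline{\varphi}_{V_\cot,W_\cot}\colon v\ot w\mapsto \zero{v}\ot^\cot\zero{w}\,\co{\one{v}}{\one{w}}$. Where you genuinely diverge is in how the monoidal equivalence is concluded. The paper finishes constructively: since $\bar\cot$ is a 2-cocycle on $\hg$ that untwists $\hg$ back to $H$ and $V_\cot$ back to $V$, the monoidal functor $(\Gamma,\varphi)$ admits an explicit inverse monoidal functor $(\overline\Gamma,\overline\varphi)$, which in fact yields an isomorphism (not merely an equivalence) of monoidal categories. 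You instead invoke the standard categorical fact that a strong monoidal functor whose underlying functor is an equivalence is automatically a monoidal equivalence; this is correct but nonconstructive, and it outsources the existence of a monoidal quasi-inverse to a black-box result (doctrinal adjunction) rather than exhibiting it. On the other hand, you are more thorough than the paper on the functor axioms themselves: the paper silently omits naturality and the coherence constraints, whereas you verify both, and your observation that the associativity constraint for $\varphi$ is precisely the convolution inverse, in the convolution algebra $\Hom(H^{\ot 3},\bbK)$, of the 2-cocycle identity \eqref{lcocycle} (note the two factors swap order under inversion, $(a\ast b)^{-1}=b^{-1}\ast a^{-1}$, which is what makes the bookkeeping come out right) is a genuinely useful supplement: it makes explicit that the cocycle condition is exactly the monoidality of $\varphi$, a point the paper leaves implicit.
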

\begin{proof}
The invertibility of $\varphi_{V,W}$ follows immediately from the invertibility of the cocycle $\cot$.  
 The fact that it  is a morphism in the category $\mathcal{M}^{\hg}$ is easily shown as follows:
\begin{eqnarray*}
(\varphi_{V,W} \ot \id) \left(\delta^{V_\cot \ot^\cot W_\cot} (v \ot^\cot w) \right)&=&
\zero{v} \ot \zero{w} \,\coin{\one{v}}{\one{w}}\ot \co{\two{v}}{\two{w}}  \three{v}\three{w} \coin{\four{v}}{\four{w}}
\\
&=&\zero{v} \ot \zero{w} \ot {\one{v}}{\one{w}} \, \coin{\two{v}}{\two{w}} 
\\
&=& \delta^{(V \ot W)_\cot} (\zero{v} \ot \zero{w}) \,
\coin{\one{v}}{\one{w}}
\\
 &=& \delta^{(V \ot W)_\cot} \left( \varphi_{V,W} (v \ot^\cot w)\right)~,
\end{eqnarray*}
where the coaction $\delta^{(V \ot W)_\cot}$ is given by
$
\delta^{(V \ot W)_\cot}: v \ot w \longmapsto \zero{v} \ot \zero{w} \ot {\one{v}} {\one{w}}
$ (cf.\  \eqref{deltaVW}). Hence  $(\Gamma,\varphi):
(\mathcal{M}^H, \ot)\to  (\mathcal{M}^{\hg}, \ot^\cot)$ is a monoidal functor.

The monoidal categories are equivalent (actually they are isomorphic) because $\bar\cot$ twists back
$H_\cot$ to $H$ and $V_\cot$ to $V$  so that the monoidal functor $(\Gamma, \varphi)$ has
an inverse $(\overline{\Gamma}, \overline\varphi)$, where 
$\overline{\Gamma}: {\cal M}^{H_\cot}\to {\cal M}^{H}$ is the inverse of the functor $\Gamma$
 and $\overline{\varphi}_{V_\cot,W_\cot}: (V_\cot)_{\bar\cot}\otimes (W_\cot)_{\bar\cot}\to
(V_\cot\otimes^\cot W_\cot)_{\bar{\cot}}\,$, 
$\,v \ot w \mapsto   \zero{v} \ot^\cot \zero{w} ~\cot({\one{v}}\otimes {\one{w}}) $.
\end{proof}

Given a 2-cocycle $\gamma$ on $H$,
the $H$-comodule algebra $A$ is also deformed in an $H_\cot$-comodule
algebra $A_\cot$. The $H_\cot$-comodule structure is just the
$H$-comodule structure now thought as an $H_\cot$-structure, the
product in $A_\cot$ is given by
 \be\label{rmod-twist} 
 m_\cot : A_\cot \otimes^\cot A_\cot \longrightarrow A_\cot ~,~~a\otimes^\cot a^\prime \longmapsto \zero{a} \zero{a'} \,\coin{\one{a}}{\one{a'}} =: a \mtco a'~
 \ee
(and the unit is undeformed). Associativity
of this product follows from the cocycle condition (\ref{lcocycle}). Using the convolution inverse
$\bar\cot$ of $\cot$ we can twist back $A_\cot$ to $A$. This implies
that the functor that deforms $H$-comodule algebras into
$H_\cot$-comodule algebras induces an equivalence between $H$ and
$H_\cot$-comodule algebras.
\sk
By a similar construction one obtains the functor
 $\Gamma : {}_A{\cal M}_{A}{}^H\to {}_{A_\cot}{\cal
   M}_{A_\cot}{}^{H_\cot}$ between relative $(H,A)$ and $(H_\cot,A_\cot)$-Hopf modules.
If $V$ is a relative $(H,A)$-Hopf module, then it is an
$H_\cot$-comodule $V_\cot$ that becomes  a relative
$(H_\cot,A_\cot)$-Hopf module with the deformed left and right
$A_\cot$-actions:
\begin{eqnarray}\label{av}
\tra_{V_\cot}:  \pg \ot^\cot V_\cot &\longrightarrow& V_\cot \, ,\\
 a\ot^\cot v\; &\longmapsto & (\zero{a} \tra_V \zero{v}) \coin{\one{a}}{\one{v}}~,\nn\\
\trl_{V_\cot}:    V_\cot \ot^\cot \pg&\longrightarrow& V_\cot ~ ,\nn\\
 v\ot^\cot a\; &\longmapsto &(\zero{v} \trl_V \zero{a}) \, \coin{\one{v}}{\one{a}}\nn~.
\end{eqnarray}
Moreover the maps $\varphi_{V,W}$ in (\ref{nt}) are isomorphisms  in the
category ${}_{A_\cot}{\cal  M}_{A_\cot}{}^{H_\cot}$ of $(H_\cot,A_\cot)$-relative Hopf modules.

\section{Twist deformations of Hopf-Galois extensions}\label{sec:twHG}
We first deform $H$-Hopf-Galois extensions via a 2-cocycle on $H$,
then via a 2-cocycle on a Hopf algebra $K$ associated with an external
symmetry of the Hopf-Galois extension and finally combine both
deformations. If the initial Hopf-Galois extension is given by a
$G$-principal bundle the first twist deformation is a 
deformation of the structure group and of the fiber of the principal
bundle, while the second is a deformation of the base space. 
With abuse of language, also for arbitrary
$H$-Hopf-Galois extensions $B=A^{coH}\subset A$ we speak of deformations of the ``structure group'' $H$ and of the ``base
space'' $B$.

\subsection{Deformation of the ``structure group'' $H$ via a 2-cocycle on $H$}

Given an $H$-comodule algebra $A$ and a twist $\cot$ on $H$ we can consider the canonical map 
$\chi: A \ot_B A \ra  A \ot  \underline{H}$ as well as the canonical map on the twist deformed structures
$\chi_\cot: \pg \ot_B^\cot \pg \ra  \pg \ot^\cot  \underline{\hg}$.
We show that $\can$ is invertible iff $\can_\cot$ is invertible,
i.e., that Hopf-Galois extensions are deformed into Hopf-Galois extensions.
In particular if $\chi: A \ot_B A \ra  A \ot  \underline{H}$ is associated to a commutative principal bundle
as in Example \ref{algebraicpb} and \ref{ex:principalbundle}, we obtain noncommutative (or quantum) principal bundles described
by the Hopf Galois extension $\chi_\cot: \pg \ot_B^\cot \pg \ra  \pg \ot^\cot  \underline{\hg}$.
In order to relate $\can$ to $\can_\cot$ we first observe that
bijectivity of the $(H,A)$-relative Hopf module map  $\chi: A \ot_B A \ra
A \ot  \underline{H}$ is equivalent to bijectivity of the
$(H_\cot,A_\cot)$-relative Hopf module map $\Gamma(\chi): (A \ot_B
A)_\cot \ra  (A \ot  \underline{H})_\cot$ (recall that as a linear map
$\Gamma(\chi)=\chi$). Next we relate
$\Gamma(\chi): (A \ot_B A)_\cot \ra  (A \ot  \underline{H})_\cot$ to
$\chi_\cot: \pg \ot_B^\cot \pg \ra  \pg \ot^\cot  \underline{\hg}$ via
the  diagram, 
\begin{flalign}\label{diagr-can-pre}
\xymatrix{
\ar[dd]_-{\varphi_{A,A}} A_\cot \otimes_B^\cot A_\cot \ar[rr]^-{\chi_\cot} && A_\cot \otimes^\cot \underline{\hg}\ar[d]^-{\id\otimes^\cot \Q}\\
&& A_\cot \otimes^\cot \underline{H}_\cot \ar[d]^-{\varphi_{A,\underline{H}}}\\
(A\otimes_B A)_\cot \ar[rr]^-{\Gamma(\chi)} && (A\otimes \underline{H})_\cot
}
\end{flalign}
where, as we now explain, the vertical arrows are $H_\cot$-comodule isomorphisms. We will show that
this is a commutative diagram.

From Theorem {\ref{thm:funct}} the maps
$\varphi_{A,A}: A_\cot\otimes^\cot {\underline H}_{\,\cot}\to (A\otimes \underline{H})_\cot$ and $\varphi_{A,A}: A_\cot\otimes^\cot A_\cot\to (A\otimes A)_\cot$ and hence
the induced map on the quotients $\varphi_{A,A}: A_\cot\otimes_B^\cot
A_\cot\to (A\otimes_B A)_\cot$ are all $H$-comodule isomorphisms.
We are left with the description of the map
$\Q:{\underline{H_\cot}}\to \underline{H}_{\,\cot}$, between the 
$H_\cot$-comodule  ${\underline{H_\cot}}$ with $H_\cot$-adjoint
coaction 
\be\label{Ad-cot}
\delta^{\underline{\hg}} = \mathrm{Ad}_{\cot}: \underline{\hg} \longrightarrow \underline{\hg}\otimes \hg~,~~
h \longmapsto \two{h}\otimes S_\cot (\one{h}) \mt \three{h} ~ 
\ee
and the $H_\cot$-comodule ${\underline{H_\cot}}$ that has $H_\cot$-coaction 
(cf. (\ref{adj}))
\be\label{Adc}
\delta^{\underline{H}_\cot} = \mathrm{Ad}: \underline{H}_\cot \longrightarrow \underline{H}_\cot\otimes H_\cot~,~~
h \longmapsto \two{h}\otimes S (\one{h}) \three{h} ~ ;
\ee
while 
${\underline{H}}_\cot$ is the deformation of the $H$-comodule $\underline{H}_{}$, in ${\underline{H_\cot}}$ we  first
deform the Hopf algebra $H$ to $H_\cot$ and then regard it as an  
$H_\cot$-comodule.

\begin{thm}\label{prop:mapQ}
The $\bbK$-linear map 
\begin{eqnarray}\label{mapQ}
\Q : \underline{\hg} \longrightarrow \underline{H}_\cot ~,~~
h \longmapsto \three{h} \, u_\cot(\one{h}) \, \coin{S(\two{h})}{\four{h}}
\end{eqnarray}
is an isomorphism of right $\hg$-comodules, with inverse
\be\label{mapQinv}
\Q^{-1} : \underline{H}_\cot \longrightarrow \underline{\hg} ~,~~
h \longmapsto \three{h} \, \bar{u}_\cot (\two{h}) \, \co{S(\one{h})}{\four{h}}~.
\ee
\end{thm}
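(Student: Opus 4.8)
The plan is to establish two things about $\Q$: that it is a morphism of right $\hg$-comodules, and that \eqref{mapQinv} defines a genuine two-sided inverse. Both are verified by direct computations in Sweedler notation, the essential inputs being the cocycle condition \eqref{lcocycle}, unitality and convolution invertibility of $\cot$ (with inverse $\bar\cot$), the fact that $u_\cot$ and $\bar{u}_\cot$ are convolution inverses (Proposition~\ref{prop:co}), and the derived identity \eqref{abc}.

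First I would check the comodule morphism property, namely
\[
\mathrm{Ad}\big(\Q(h)\big) = (\Q\ot\id)\big(\mathrm{Ad}_\cot(h)\big),
\]
which expresses that $\delta^{\underline{H}_\cot}\circ\Q = (\Q\ot\id)\circ\delta^{\underline{\hg}}$ for the coactions \eqref{Ad-cot} and \eqref{Adc}. The left-hand side involves only the ordinary antipode and product, whereas the right-hand side requires unfolding $\mathrm{Ad}_\cot(h)=\two{h}\ot S_\cot(\one{h})\mt\three{h}$: I would substitute $S_\cot=u_\cot\ast S\ast\bar{u}_\cot$ and then expand the twisted product $\mt$ from \eqref{hopf-twist}. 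This produces a cocycle factor pairing an $S$-leg of $h$ against a plain leg, of exactly the form $\co{g\one{h}}{S(\two{h})k}$ appearing on the left of \eqref{abc}. The strategy is to arrange the Sweedler legs so that \eqref{abc} applies, eliminating one antipode together with one cocycle factor; the surviving $u_\cot$ and $\bar\cot$ contributions then reorganize, via \eqref{lcocycle} and the counit axiom, into precisely the factors $u_\cot(\one{h})\,\coin{S(\two{h})}{\four{h}}$ multiplying $\three{h}$ in the first tensor slot, i.e.\ into the image under $\Q\ot\id$ of the coaction.

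For invertibility I would substitute \eqref{mapQ} into \eqref{mapQinv}, and conversely. Tracking the legs, the compositions $\Q^{-1}\circ\Q$ and $\Q\circ\Q^{-1}$ each yield a string of factors $u_\cot,\bar{u}_\cot,\cot,\bar\cot$ attached to various legs of $h$; using the convolution-inverse relations $u_\cot\ast\bar{u}_\cot=\varepsilon=\bar{u}_\cot\ast u_\cot$ and $\cot\ast\bar\cot=\varepsilon_{H\ot H}=\bar\cot\ast\cot$ to cancel adjacent factors, the cocycle identity \eqref{lcocycle} to move cocycles across the antipode legs, and finally $\two{h}\,\varepsilon(\one{h})=h$ to collapse the coproduct, each composition telescopes to $\id$. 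The two verifications are structurally mirror images, with the pairs $(\cot,u_\cot)$ and $(\bar\cot,\bar{u}_\cot)$ trading roles.

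I expect the comodule morphism computation, rather than the invertibility, to be the main obstacle: because $\mathrm{Ad}_\cot$ couples the twisted antipode $S_\cot$ and the twisted product $\mt$, a naive expansion produces a long expression with entangled cocycle factors, and the whole argument hinges on massaging the Sweedler indices into the exact shape demanded by \eqref{abc}. Once that identity can be invoked the remaining rearrangement is routine cocycle algebra, but recognizing the pattern and checking that the leftover scalars assemble into the defining factors of $\Q$ in \eqref{mapQ} is the delicate bookkeeping step.
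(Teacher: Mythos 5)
Your overall strategy --- verifying the comodule-morphism property and the two-sided inverse by direct Sweedler computations --- is the same as the paper's, and your plan for the invertibility half is exactly right: the paper dispatches it as ``a direct calculation'', and the cancellations you list ($u_\cot\ast\bar u_\cot=\varepsilon$, $\cot\ast\bar\cot=\varepsilon_{H\ot H}$) are precisely what make both composites telescope to the identity.

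The gap is in the half you correctly single out as the hard one. Expanding $\mathrm{Ad}_\cot(h)=\two{h}\ot S_\cot(\one{h})\mt\three{h}$ with $S_\cot=u_\cot\ast S\ast\bar u_\cot$ and \eqref{hopf-twist} produces only cocycle factors with the antipode in the \emph{first} tensor argument: after relabelling one gets
\begin{equation*}
\Q(\four{h})\ot u_\cot(\one{h})\,\bar u_\cot(h_{\scriptscriptstyle{(5)}})\,\co{S(\four{h})}{h_{\scriptscriptstyle{(10)}}}\,S(\three{h})h_{\scriptscriptstyle{(11)}}\,\coin{S(\two{h})}{h_{\scriptscriptstyle{(12)}}}\,,
\end{equation*}
and no factor of the shape $\co{g\one{h}}{S(\two{h})k}$, where the antipode leg sits \emph{inside the second} argument, ever appears. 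So the pattern on which you say ``the whole argument hinges'' cannot be produced, and the step would stall. Identity \eqref{abc} is in fact the key tool for the \emph{other} main computation of the paper, namely the commutativity of diagram \eqref{diagr-can-pre} in Theorem \ref{theo:diagr-can-pre}, where $\varphi^{-1}_{A,\underline{H}}$ does produce a factor $\co{\one{a}\one{a'}}{S(\two{a'})\four{a'}}$ of exactly that shape. What actually closes the present computation (and is how the paper proceeds) are two convolution-inverse cancellations: first $\bar u_\cot(h_{\scriptscriptstyle{(5)}})\,u_\cot(h_{\scriptscriptstyle{(6)}})=\varepsilon(h_{\scriptscriptstyle{(5)}})$ on adjacent legs, then the sandwich cancellation $\coin{S(\two{z})}{\one{w}}\,\co{S(\one{z})}{\two{w}}=\varepsilon(z)\,\varepsilon(w)$, which is nothing but $\bar\cot\ast\cot=\varepsilon_{H\ot H}$ evaluated on $S(z)\ot w$ after grouping consecutive Sweedler legs by coassociativity. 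Both tools appear in your list of inputs, so the proof is recoverable, but only by abandoning \eqref{abc} as the pivot. (If you do want \eqref{abc} to play a role, its specialization at $g=\1$, namely $u_\cot(\one{h})\,\coin{S(\two{h})}{k}=\co{\one{h}}{S(\two{h})k}$, lets you rewrite $\Q(h)=\co{\one{h}}{S(\two{h})\four{h}}\,\three{h}$; but this is a reformulation, not the mechanism that makes the two sides of the comodule-morphism identity meet.)
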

\begin{proof}
It is easy to prove by a direct calculation that $\Q^{-1}$ is the inverse of $\Q$. 
We now show that $\Q$ is a right $\hg$-comodule morphism, for all $h \in \underline{\hg}$,
\begin{flalign*}
&(\Q \ot \id) (\mathrm{Ad}_\cot (h) )=
\\
&\qquad 
= \Q(\two{h}) \ot S_\cot (\one{h}) \mt \three{h}
\\
&\qquad=
\Q(\four{h}) \ot u_\cot(\one{h}) S (\two{h}) \bar{u}_\cot(\three{h}) \mt h_{\scriptscriptstyle{(5)}}
\\
&\qquad=
u_\cot (h_{\scriptscriptstyle{(6)}}) h_{\scriptscriptstyle{(8)}} \coin{S(h_{\scriptscriptstyle{(7)}})}{h_{\scriptscriptstyle{(9)}}} \ot u_\cot(\one{h}) \bar{u}_\cot (h_{\scriptscriptstyle{(5)}}) \co{S(\four{h})}{h_{\scriptscriptstyle{(10)}}} S(\three{h}) h_{\scriptscriptstyle{(11)}}
\coin{S(\two{h})}{h_{\scriptscriptstyle{(12)}}}
\\
&\qquad=
h_{\scriptscriptstyle{(6)}} \coin{S(h_{\scriptscriptstyle{(5)}})}{h_{\scriptscriptstyle{(7)}}} \ot u_\cot(\one{h})  \co{S(\four{h})}{h_{\scriptscriptstyle{(8)}}} S(\three{h}) h_{\scriptscriptstyle{(9)}}
\coin{S(\two{h})}{h_{\scriptscriptstyle{(10)}}}
\\
&\qquad=
u_\cot(\one{h}) h_{\scriptscriptstyle{(4)}}  \ot S(\three{h}) {h_{\scriptscriptstyle{(5)}}} 
\coin{S(\two{h})}{h_{\scriptscriptstyle{(6)}}} =\mathrm{Ad}(\Q(h)) ~,
\end{flalign*}
were in the fourth passage we used
${u}_\cot(h_{\scriptscriptstyle{(6)}})\bar u_\cot(h_{\scriptscriptstyle{(5)}})=\varepsilon(h_{\scriptscriptstyle{(5)}})$, and
in the fifth $h_{\scriptscriptstyle{(6)}}\bar{\cot} (S(h_{\scriptscriptstyle{(5)}})\otimes
h_{\scriptscriptstyle{(7)}})\cot(S(\four{h})\otimes
h_{\scriptscriptstyle{(8)}})=h_{\scriptscriptstyle{(5)}}\varepsilon(\four{h})\varepsilon(h_{\scriptscriptstyle{(6)}})$.
\end{proof}
\begin{rem} \label{dualpaired}
If we dualize this picture by considering a dually paired Hopf
algebra $H'$ (and dual modules on dual vector spaces), then the right  $H$-adjoint coaction
dualizes into the right $H'$-adjoint action,
$\zeta\blacktriangleleft \xi=S(\one{\xi})\zeta\two{\xi}$ for all
$\zeta,\xi \in H'$. If we further consider a
 mirror construction by using left  adjoint actions rather than right ones, 
 then the analogue of the isomorphism $\Q$ is the isomorphism $D$
 studied in \cite{NCG2} and more in general in \cite{AS}. Explicitly
 the isomorphism $\Q$ is dual to the isomorphism $D$ relative to the
 Hopf  algebra ${H'^{op}}^{\,cop}$ with opposite product and
 coproduct; it follows from  \cite{BSS}  that 
 this latter is a component of a natural transformation determining the equivalence of the closed monoidal categories of left  ${H'^{op}}^{\,cop}$-modules and  left
 ${(H_{\gamma}')^{op}}^{\,cop}$-modules.
\end{rem}
\begin{thm}\label{theo:diagr-can-pre}
Let $H$ be a Hopf algebra and $A$ an $H$-comodule algebra.
Consider the algebra extension $B = A^{coH}\subset A$ and the associated
canonical map  $\can : A \otimes_B A \longrightarrow  A \ot H$.
Given a $2$-cocycle $\cot : H\otimes H\to\bbK$ the diagram
(\ref{diagr-can-pre}) is a commutative diagram of $H$-comodules.
\end{thm}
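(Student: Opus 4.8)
The plan is to establish commutativity by a direct Sweedler-calculus comparison of the two composites $\pg\otimes_B^\cot\pg\to (A\otimes\underline{H})_\cot$. I would first record the structural facts that make the comparison manageable. All four corners are right $\hg$-comodules (recall $\hg=H$ as a coalgebra, so this is the same as the phrase ``$H$-comodules'' in the statement), and all five arrows are $\hg$-comodule morphisms: the two copies of $\varphi$ by Theorem~\ref{thm:funct}, the map $\Q$ by Theorem~\ref{prop:mapQ}, the map $\can_\cot$ as a canonical map via Proposition~\ref{prop_canMorph} applied to the comodule algebra $\pg$ (whose coinvariants are again $B$, since $\mtco$ reduces to the undeformed product on coinvariants), and $\Gamma(\can)=\can$ because $\Gamma$ is a functor. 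Crucially, all five arrows are also left $\pg$-module morphisms, so both composites are left $\pg$-linear; since $\pg\otimes_B^\cot\pg$ is generated as a left $\pg$-module by the elements $\1\otimes_B^\cot a'$, it suffices to verify the diagram on these, which removes the $a$-indices from the bookkeeping.

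On such an element the lower-left route is immediate: $\varphi_{A,A}(\1\otimes_B^\cot a')=\1\otimes_B\zero{a'}\,\varepsilon(\one{a'})=\1\otimes_B a'$ by the unitality of the cocycle and the counit axiom, whence
\be\label{target}
\Gamma(\can)\big(\varphi_{A,A}(\1\otimes_B^\cot a')\big)=\can(\1\otimes_B a')=\zero{a'}\otimes\one{a'}\ \in (A\otimes\underline{H})_\cot~.
\ee
This is the expression the other route must reproduce.

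For the upper-right route I would compute in three stages. Since $\1\mtco\zero{a'}=\zero{a'}$, the canonical map for $\pg$ gives $\can_\cot(\1\otimes_B^\cot a')=\zero{a'}\otimes^\cot\one{a'}$; applying $\id\otimes^\cot\Q$ with \eqref{mapQ} turns this into $\zero{a'}\otimes^\cot\three{a'}\,u_\cot(\one{a'})\,\coin{S(\two{a'})}{\four{a'}}$; and $\varphi_{A,\underline H}$ then coacts on both legs, trivially on the first and by the adjoint coaction \eqref{adj} on the second, producing a single element of $(A\otimes\underline{H})_\cot$ carrying a string of scalars built from $\coin{}{}$, $u_\cot$ and the antipode $S$. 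The decisive step, which I expect to be the main obstacle, is to resum this string down to $1$, so that the surviving tensor collapses (via the counit axiom) to $\zero{a'}\otimes\one{a'}$. The string is tailor-made for identity \eqref{abc}: the factors $u_\cot(\one{a'})$ and $\coin{S(\two{a'})}{\four{a'}}$ fed in by $\Q$ are exactly the $u_\cot(\two{h})$ and $\coin{S(\three{h})}{\,\cdot\,}$ on its right-hand side, while the adjoint coaction supplies the $S(\two{h})$-flanked argument that \eqref{abc} knows how to contract. I would therefore apply \eqref{abc} (together with its convolution-inverse counterpart and the relation $u_\cot\ast\bar{u}_\cot=\varepsilon$) to fuse these factors and telescope the antipode terms, arriving at the right-hand side of \eqref{target}.

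The genuine difficulty is entirely the Sweedler bookkeeping inside this resummation: even for $\1\otimes_B^\cot a'$ the element $a'$ is split into some eight components once the adjoint coaction has acted, and the work is to keep their order straight and to recognise the right-hand pattern of \eqref{abc} among them. Apart from that recognition step every manipulation is routine, relying only on coassociativity, the unitality and convolution-invertibility of $\cot$, and the antipode/counit axioms; the same type of computation already appears in the proof of Theorem~\ref{prop:mapQ}.
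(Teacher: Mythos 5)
Your strategy is sound and genuinely different from the paper's proof. The paper computes both composites on a \emph{general} element $a\ot_B^\cot a'$: it evaluates $(\id\ot^\cot\Q)\circ\can_\cot$ directly, pulls the other composite back through $\varphi_{A,\underline{H}}^{-1}$, and matches the two expressions using the full identity \eqref{abc} (with all of $g,h,k$ nontrivial) followed by a $\cot\ast\bar\cot$ cancellation. Your reduction via left $\pg$-linearity to the generators $\1\ot_B^\cot a'$ eliminates the $a$-indices, and correspondingly you only need \eqref{abc} in its $g=\1$ specialization $u_\cot(\one{h})\,\coin{S(\two{h})}{k}=\co{\one{h}}{S(\two{h})k}$; that is a real simplification. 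Its cost is that the left $\pg$-linearity of the vertical arrows is not free: for $\varphi_{A,A}$ and $\varphi_{A,\underline{H}}$ (the latter with the action $\mtco$ on the first leg of $A_\cot\ot^\cot\underline{H}_\cot$, which you must fix as part of the setup) it requires a short computation with the convolution-inverse form of the cocycle condition \eqref{lcocycle}; since the module aspects of this diagram are precisely what the paper defers to \cite{ABPS}, this step should be written out, not just asserted.

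One statement in your sketch is wrong and, taken literally, would make the final step fail: $\varphi_{A,\underline{H}}$ does \emph{not} coact ``trivially on the first'' leg. By \eqref{nt} it sends $v\ot^\cot w$ to $\zero{v}\ot\zero{w}\,\coin{\one{v}}{\one{w}}$ with the nontrivial coaction $\delta^A$ on the first leg and $\mathrm{Ad}$ on the second. Were the first-leg coaction trivial, one would get $\coin{\1}{S(\one{w})\three{w}}=\varepsilon(\one{w})\varepsilon(\three{w})$, so $\varphi_{A,\underline{H}}$ would be the identity map, your upper route would terminate at $\zero{a'}\ot\Q(\one{a'})$, and commutativity of the diagram would force $\Q=\id$, which is false. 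The factor contributed by the first leg is exactly what closes the computation:
\begin{align*}
\varphi_{A,\underline{H}}\big(\zero{a'}\ot^\cot \Q(\one{a'})\big)
&=\zero{a'}\ot \five{a'}\;\coin{\one{a'}}{S(\four{a'})\,a'_{\scriptscriptstyle{(6)}}}\;u_\cot(\two{a'})\;\coin{S(\three{a'})}{a'_{\scriptscriptstyle{(7)}}}\\
&=\zero{a'}\ot \five{a'}\;\coin{\one{a'}}{S(\four{a'})\,a'_{\scriptscriptstyle{(6)}}}\;\co{\two{a'}}{S(\three{a'})\,a'_{\scriptscriptstyle{(7)}}}\\
&=\zero{a'}\ot\one{a'}~,
\end{align*}
where the second equality is \eqref{abc} at $g=\1$, and the third holds because $S(\four{a'})a'_{\scriptscriptstyle{(6)}}\ot S(\three{a'})a'_{\scriptscriptstyle{(7)}}$ is (after renumbering) the coproduct of a single element of the form $S(x)y$, so the two remaining cocycles resum as $\bar\cot\ast\cot=\varepsilon_{H\ot H}$ and the counits collapse the Sweedler string. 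Note that the phenomenon you describe — a trivial coaction on $\underline{H}$ making the corresponding $\varphi$ the identity — does occur in this paper, but in Theorem \ref{Th:diagr-can2} for the left $K$-coaction, which is likely the source of the slip. With this correction, your argument goes through.
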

\begin{proof}
We prove that the diagram \eqref{diagr-can-pre}  commutes.
We obtain for the composition $(\id\ot^\cot \Q) \circ \can_\cot$ the following expression
\begin{eqnarray*}
(\id \ot^\cot \Q) \big( \can_\cot (a \ot_B^\cot a') \big) &=& 
\zero{a} \zero{a'} \ot^\cot \Q (\two{a'}) \coin{\one{a}}{\one{a'}}
\\
&=& 
\zero{a} \zero{a'} \ot^\cot \four{a'} u_\cot(\two{a'}) \coin{S(a'_{\scriptscriptstyle{(3)}})}{a'_{\scriptscriptstyle{(5)}}}  \coin{\one{a}}{\one{a'}} ~.
\end{eqnarray*}
On the other hand, from \eqref{nt} and  \eqref{adj} we have 
$$
\varphi_{A,\underline{H}}^{-1}(a \ot h) =
\zero{a} \ot^\cot \two{h} \co{\one{a}}{S(\one{h})\three{h}} ~,
$$
so that for the composition $\varphi^{-1}_{A,\underline{H}}\circ \Gamma(\can)
\circ \varphi_{A,A}$ we obtain (recalling that $\Gamma(\can)=\can$)
\begin{flalign*}
&\varphi_{A,\underline{H}}^{-1} \left(\Gamma(\can) (\varphi_{A,A} (a \ot^\cot_B a'))\right) \\
 &\qquad = 
\varphi_{A,\underline{H}}^{-1}  (\zero{a} \zero{a'} \ot \one{a'}) \,\coin{\one{a}}{\two{a'}}
\\
&\qquad =
\zero{a} \zero{a'} \ot^\cot \three{a'}  \co{\one{a}\one{a'}}{S(a'_{\scriptscriptstyle{(2)}})a'_{\scriptscriptstyle{(4)}}} \coin{\two{a}}{a'_{\scriptscriptstyle{(5)}}}  
\\
&\qquad =
\zero{a} \zero{a'} \ot^\cot \four{a'}  \coin{\one{a}}{\one{a'}} u_\cot(\two{a'}) \coin{S(\three{a'})}{a'_{\scriptscriptstyle{(5)}}} \co{\two{a}}{a'_{\scriptscriptstyle{(6)}}}  \coin{\three{a}}{a'_{\scriptscriptstyle{(7)}}}~,
\end{flalign*}
where we have used \eqref{abc}. Since $\bar\cot$ is the
convolution inverse of $\cot$,  the last two terms simplify, giving the desired identity.
{}From the properties of the canonical map (Proposition
\ref{prop_canMorph}) and of the natural isomorphisms $\varphi$ all
arrows in the diagram are $H_\cot$-comodule maps. 
\end{proof}
Since all vertical arrows in diagram (\ref{diagr-can-pre})  
are isomorphisms, as immediate corollary of this theorem (and recalling
that  $\Gamma(\chi)=\chi$ as linear map) we have that $\chi$ is
bijective iff $\chi_\cot$ is bijective. Hence we conclude that
\begin{cor}
Hopf-Galois extensions are twist deformed in Hopf-Galois extensions.
\end{cor}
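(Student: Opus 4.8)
The plan is to reduce the bijectivity of the deformed canonical map $\chi_\cot$ to that of the undeformed one $\chi$, exploiting the commutative diagram \eqref{diagr-can-pre} established in Theorem \ref{theo:diagr-can-pre}. Before doing so I would check that the statement is well-posed, i.e.\ that one is really deforming \emph{the same} extension: the coinvariant subalgebra of $A_\cot$ coincides with $B$. Indeed, the $H_\cot$-coaction on $A_\cot$ is, as a linear map, the original $H$-coaction $\delta^A$, so $A_\cot^{coH_\cot} = \{a \mid \delta^A(a) = a \ot \1_H\}$ is literally the set $B$; moreover for $b,b'\in B$ one has $b \mtco b' = \zero{b}\zero{b'}\,\coin{\one{b}}{\one{b'}} = bb'\,\coin{\1_H}{\1_H} = bb'$, since $\bar\cot$, being the convolution inverse of the unital cocycle $\cot$, is itself unital. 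Hence $B$ is undeformed as an algebra, and it is meaningful to ask whether $B\subset A_\cot$ is an $H_\cot$-Hopf-Galois extension.

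Next I would run the diagram chase. In \eqref{diagr-can-pre} the three vertical arrows $\varphi_{A,A}$, $\id \ot^\cot \Q$ and $\varphi_{A,\underline{H}}$ are isomorphisms: the maps $\varphi$ by Theorem \ref{thm:funct} (the monoidal natural isomorphism, which descends to the quotient map $A_\cot \ot_B^\cot A_\cot \to (A\ot_B A)_\cot$), and $\Q$ by Theorem \ref{prop:mapQ}. Theorem \ref{theo:diagr-can-pre} guarantees that the square commutes, so
\[
\Gamma(\chi) = \varphi_{A,\underline{H}} \circ (\id \ot^\cot \Q) \circ \chi_\cot \circ \varphi_{A,A}^{-1}.
\]
Since pre- and post-composition with isomorphisms preserves bijectivity, $\chi_\cot$ is bijective if and only if $\Gamma(\chi)$ is bijective.

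Finally I would invoke the fact, recorded when the functor $\Gamma$ was introduced, that $\Gamma(\chi) = \chi$ as a $\bbK$-linear map: the deformation only reinterprets the target coaction and leaves the underlying linear map untouched. Therefore $\Gamma(\chi)$ is bijective if and only if $\chi$ is, and chaining the two equivalences yields that $\chi$ is bijective if and only if $\chi_\cot$ is. By Definition \ref{def:hg} this is precisely the assertion that $B\subset A$ is an $H$-Hopf-Galois extension if and only if $B\subset A_\cot$ is an $H_\cot$-Hopf-Galois extension, which is the claim.

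I do not expect a genuine obstacle at this stage: all the substantive content has already been absorbed into the construction of the isomorphism $\Q$ (Theorem \ref{prop:mapQ}) and the verification that \eqref{diagr-can-pre} commutes (Theorem \ref{theo:diagr-can-pre}). The only points requiring a moment's care are the bookkeeping identity $\Gamma(\chi)=\chi$ and the observation that the coinvariants are preserved, so that the corollary is a clean diagram-chase consequence of the preceding theorems.
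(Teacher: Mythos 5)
Your proof is correct and takes essentially the same route as the paper: the paper's own argument is precisely the diagram chase through \eqref{diagr-can-pre}, using that the vertical arrows ($\varphi_{A,A}$, $\id\otimes^\cot\Q$, $\varphi_{A,\underline{H}}$) are isomorphisms and that $\Gamma(\chi)=\chi$ as a $\bbK$-linear map, so $\chi$ is bijective iff $\chi_\cot$ is. Your preliminary check that $A_\cot^{\,coH_\cot}=B$ with undeformed product is correct bookkeeping that the paper leaves implicit.
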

Moreover if  the Hopf-Galois extension $\chi: A \ot_B A
\ra  A \ot  \underline{H}$ is trivial (i.e. has the
normal basis property or equivalently  is cleft) we have a left
$B$-module and right $H$-comodule
isomorphism $\theta: A\to B\otimes H$. This same linear map, now seen
as a map  $A_\cot\to B\otimes_\cot H_\cot$ is a left
$B$-module and right $H_\cot$-comodule
isomorphism  that determines triviality
of the Hopf-Galois extension $\chi_\cot: \pg \ot_B^\cot \pg \ra  \pg \ot^\cot  \underline{\hg}$.

\begin{rem}
As shown in \cite{ABPS}, there is a canonical relative  $(H_\cot,A_\cot)$-Hopf module structure
on $A_\cot\otimes^\cot {\underline H}_\cot$ so that  diagram
(\ref{diagr-can-pre})  becomes a commutative diagram of relative
$(H_\cot,A_\cot)$-Hopf modules, i.e., a diagram in the category
${}_{A_\cot}{\cal M}_{A_\cot}{}^{H_\cot}$.
\end{rem}
\begin{rem}
Montgomery and Schneider in 
\cite[Th. 5.3]{MS05} prove the above corollary by using that as 
vector spaces $A\ot_B A=A_\cot\ot_B A_\cot$ and
$A\ot H=A_\cot\ot H_\cot$, and showing that the canonical map $\can$ is the composition of $\can_\cot$ with
an invertible map. The proof is not within the natural categorical
setting of twists of Hopf-Galois extensions that we consider, and that
we have motivated in the introduction to be relevant for the study of
the differential geometry of noncommutative principal bundles.
\end{rem}

Finally, recalling from Definition \ref{def:pHcomodalg} the notion of principal $H$-comodule algebra
it is easy to show that deformations by $2$-cocycles $\cot:H\otimes
H\to\bbK$ preserve this structure, the key point being that given a section $s: A\to
B\otimes A$ we have the section  $s_\cot := \varphi_{B,A}^{-1}\circ \Gamma(s)
: A_\cot \to B_\cot\otimes^\cot A_\cot$ hence (cf. \cite{ABPS}),
\begin{cor}\label{cor-pcomodalg}
$A$ is a principal $H$-comodule algebra if and only if $A_\cot$ is a principal $H_\cot$-comodule algebra.
\end{cor}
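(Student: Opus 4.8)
The plan is to treat separately the two conditions in Definition \ref{def:pHcomodalg}: the Hopf-Galois property and the equivariant projectivity of $A$ as a left $B$-module. The first is already settled by the corollary to Theorem \ref{theo:diagr-can-pre}, i.e.\ by the fact that $\chi$ is bijective if and only if $\chi_\cot$ is bijective, so that $B=A^{coH}\subset A$ is an $H$-Hopf-Galois extension iff $B_\cot=(A_\cot)^{coH_\cot}\subset A_\cot$ is an $H_\cot$-Hopf-Galois extension. Here I would first record that the subalgebra of coinvariants is undeformed: since the $H_\cot$-coaction on $A_\cot$ equals the $H$-coaction on $A$ as a linear map and the coalgebra structure is unchanged, one has $B_\cot=B$ as sets, and for $b,b'\in B$ the deformed product gives $b\mtco b'=\zero{b}\zero{b'}\coin{\one{b}}{\one{b'}}=bb'$ by unitality of $\bar\cot$; hence $B_\cot=B$ also as algebras. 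It therefore remains to transport the equivariant section.

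Given a left $B$-module and right $H$-comodule morphism $s:A\to B\ot A$ with $m\circ s=\id_A$, I would set $s_\cot:=\varphi_{B,A}^{-1}\circ\Gamma(s):A_\cot\to B_\cot\ot^\cot A_\cot$, exactly as announced. That $s_\cot$ is a right $H_\cot$-comodule morphism is automatic: $\Gamma(s)$ is a morphism in $\mathcal{M}^{H_\cot}$ by functoriality of $\Gamma$, and $\varphi_{B,A}^{-1}$ is an isomorphism in $\mathcal{M}^{H_\cot}$ by Theorem \ref{thm:funct}. For the left $B_\cot$-module property I would invoke that $(\Gamma,\varphi)$ is a monoidal functor (Theorem \ref{thm:funct}): it carries the algebra object $B$, the left $B$-module $A$, and the $B$-module morphism $s$ to the corresponding structures over $B_\cot$, so that $s_\cot$ is automatically $B_\cot$-linear. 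Concretely this is transparent because, $B$ being coinvariant, $\varphi_{B,A}$ restricts to $b\ot^\cot a\mapsto b\ot a$ on $B_\cot\ot^\cot A_\cot$ and the deformed left actions $b\mtco a=ba$ coincide with the undeformed ones.

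The section identity is then immediate from the factorization $m_\cot=m\circ\varphi_{B,A}$ of the deformed restricted product, which one checks directly: for $b\in B$, $a\in A$ one has $m(\varphi_{B,A}(b\ot^\cot a))=m(b\ot a)=ba=b\mtco a=m_\cot(b\ot^\cot a)$. Hence
\[
m_\cot\circ s_\cot=m\circ\varphi_{B,A}\circ\varphi_{B,A}^{-1}\circ\Gamma(s)=m\circ\Gamma(s)=m\circ s=\id_A=\id_{A_\cot},
\]
using that $\Gamma(s)=s$ and $\id_A=\id_{A_\cot}$ as linear maps. This proves that $A$ principal implies $A_\cot$ principal. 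The converse follows by the symmetry of the construction: since $\bar\cot$ is a $2$-cocycle on $H_\cot$ with $(H_\cot)_{\bar\cot}=H$ and $(A_\cot)_{\bar\cot}=A$, applying the forward implication to $A_\cot$ and $\bar\cot$ shows that $A_\cot$ principal implies $A$ principal.

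I expect the only genuinely delicate point to be the bookkeeping that $s_\cot$ is simultaneously left $B_\cot$-linear and right $H_\cot$-colinear; both, however, reduce to the monoidal-functor properties of $(\Gamma,\varphi)$ already established in Theorem \ref{thm:funct}, so that no computation beyond the three elementary identities $B_\cot=B$, $\varphi_{B,A}|_{B_\cot\ot^\cot A_\cot}=\id$, and $m_\cot=m\circ\varphi_{B,A}$ is needed.
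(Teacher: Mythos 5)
Your proof is correct and takes essentially the same approach as the paper: the Hopf-Galois part is delegated to the corollary of Theorem \ref{theo:diagr-can-pre}, and equivariant projectivity is transported via exactly the section $s_\cot=\varphi_{B,A}^{-1}\circ\Gamma(s)$ that the paper identifies as the key point (deferring details to \cite{ABPS}), with the converse obtained by untwisting with $\bar\cot$. Your explicit verifications --- $B_\cot=B$ as algebras, $\varphi_{B,A}$ acting as the identity on $B_\cot\ot^\cot A_\cot$ by coinvariance of $B$, and $m_\cot=m\circ\varphi_{B,A}$ --- are precisely the computations the paper leaves to the reference.
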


\subsection{Deformation of the ``base space'' $B$ via a 2-cocycle on
  an external symmetry $K$}\label{sec:def_es}
Let $L$ be Lie group acting via
diffeomorphisms on both the
total manifold and the base manifold of a bundle $E\to M$, these actions being
compatible with the bundle projection (hence $L$ acts via automorphisms of
$E\to M$).  We say that $L$ is an external
symmetry of $E\to M$. 
If $E\to M$ is a $G$-bundle then
we also require $G$-equivariance of the $L$-action on the total
manifold, i.e., we require the $L$-action to commute with the
$G$-action.

Considering algebras rather than manifolds (cf.\
Example \ref{ex:principalbundle}, or Example \ref{algebraicpb} if
$L$ is an algebraic group and its action is via morphisms of affine
algebraic varieties), we say that a Hopf algebra $K$ is an external symmetry of
the extension $B=A^{coH}\subset A$ if $A$ is a $(K,H)$-bicomodule algebra,
i.e.,  if $A$ is a left $K$-comodule algebra and the  $K$-coaction on
$A$, $\da^A : A\to K\otimes A$, commutes with the right $H$-coaction $\delta^A : A\to A\otimes H$  on $A$
\begin{equation}\label{compatib}
(\da^A \ot \id)\circ \delta^A = (\id \ot \delta^A ) \circ \da^A ~.
\end{equation}
Due to this compatibility the vector subspace $B=A^{coH}\subset A$ of
$H$-coinvariant elements of $A$ is also a
$K$-comodule, the $K$-coaction on $B$ is just the restriction of that
on $A$ and  we assume it to be nontrivial (this corresponds to a
nontrivial action of $L$ on $M$).

We have seen that the  tensor product of $H$-comodules is again an
$H$-comodule, similarly the tensor product of $K$-comodules is again a
$K$-comodule, in particular $A\otimes A$ is a $K$-comodule with
$K$-coaction 
\begin{eqnarray}
\da^{A\otimes A} : A\otimes A&\longrightarrow& K\otimes A\otimes A\nn\\
a\otimes a'&\longmapsto&\mone{a}\mone{a'}\otimes \mzero{a}\otimes\mzero{a'}~,
\end{eqnarray}
where we used the notation $\da^A(a)=\mone{a}\otimes\mzero{a}$.
Recalling that $A\otimes A$ is also an $H$-comodule
(cf. (\ref{deltaVW})), it is not difficult to show that $A\otimes A$
is a $(K,H)$-bicomodule. Moreover this $(K,H)$-bicomodule
structure is induced on the quotient $A\otimes_B A$.
Similarly $H$ is trivially a $K$-comodule (with coaction $\da^H:H\to K\otimes H,
\da^H(h)=1_K\otimes h$) so that $A\otimes H$ is a $(K,H)$-bicomodule
with $K$-coaction
\begin{flalign}
\rho^{A\otimes \underline{H}} : A\otimes\underline{H}\longrightarrow K\otimes A\otimes\underline{H}~,~~
a\otimes h\longmapsto \mone{a}\otimes \mzero{a}\otimes h~.
\end{flalign}
Furthermore $A\otimes_B A$ and $A\otimes H$ are $A$-bimodules and this
structure, that is compatible with the $H$-comodule structure,  is
also compatible with the $K$-comodule structure, hence 
$A\otimes_B A$ and $A\otimes H$ are relative Hopf modules in
the category ${}^{K}{}_{A}{\cal M}_{A}{}^{H}$ of $(K,H)$-bicomodules
with compatible $A$-bimodule structure (where $A$ is a
$(K,H)$-bicomodule algebra).
   The canonical map preserves this additional structure:
\begin{prop}\label{canmapKmorph}
Let  $A$ be a $(K,H)$-bicomodule algebra, 
then the canonical map $\chi: A\otimes_B A\to A\otimes\underline{H}$,
where $B=A^{coH}$, is a morphism
in ${}^{K}{}_{A}{\cal M}_{A}{}^{H}$.
\end{prop}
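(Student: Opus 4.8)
The plan is to show that the canonical map $\chi: A\otimes_B A\to A\otimes\underline{H}$ is a morphism in the category ${}^{K}{}_{A}{\cal M}_{A}{}^{H}$ by leveraging the fact that we already know it is a morphism in ${}_{A}{\cal M}_{A}{}^{H}$. Indeed, Proposition \ref{prop_canMorph} establishes that $\chi$ is a morphism of $(H,A)$-relative Hopf modules, i.e.\ that $\chi$ is compatible with the right $H$-coaction and with both $A$-actions. The only genuinely new thing to verify here is that $\chi$ is also a morphism of left $K$-comodules, that is, that $\chi$ intertwines the $K$-coactions $\rho^{A\otimes_B A}$ and $\rho^{A\otimes\underline{H}}$. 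Everything else follows by citing Proposition \ref{prop_canMorph}.

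First I would make explicit the two $K$-coactions. The $K$-coaction on $A\otimes_B A$ is induced from $\rho^{A\otimes A}(a\otimes a')=\mone{a}\mone{a'}\otimes\mzero{a}\otimes\mzero{a'}$, while the $K$-coaction on $A\otimes\underline{H}$ is $\rho^{A\otimes\underline{H}}(a\otimes h)=\mone{a}\otimes\mzero{a}\otimes h$, with $H$ carrying the trivial $K$-comodule structure. I would then compute both composites $\rho^{A\otimes\underline{H}}\circ\chi$ and $(\id\otimes\chi)\circ\rho^{A\otimes_B A}$ applied to a generic element $a\otimes_B a'$ and check they agree. The computation of the left-hand side gives
\[
\rho^{A\otimes\underline{H}}\big(\chi(a\otimes_B a')\big)=\rho^{A\otimes\underline{H}}\big(a\,\zero{a'}\otimes\one{a'}\big)=\mone{(a\zero{a'})}\otimes\mzero{(a\zero{a'})}\otimes\one{a'}~.
\]

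The crux of the argument — and the step I expect to be the main obstacle — is the interchange between the $K$-coaction and the $H$-coaction that is packaged in the defining compatibility \eqref{compatib}, namely $(\da^A\otimes\id)\circ\delta^A=(\id\otimes\delta^A)\circ\da^A$. Since $\chi$ mixes the product on $A$ with the $H$-coaction $\delta^A$ (through the term $\zero{a'}\otimes\one{a'}$), I must push the $K$-coaction through the $H$-coaction on $a'$; this is exactly where \eqref{compatib} is needed, together with the fact that $\da^A$ is an algebra map so that $\mone{(a\zero{a'})}=\mone{a}\mone{(\zero{a'})}$ and $\mzero{(a\zero{a'})}=\mzero{a}\mzero{(\zero{a'})}$. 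Applying \eqref{compatib} to reorder $\mzero{(\zero{a'})}\otimes\mone{(\zero{a'})}\otimes\one{a'}$ into $\mone{(a')}\otimes\zero{(\mzero{a'})}\otimes\one{(\mzero{a'})}$ should convert the expression into $\mone{a}\mone{a'}\otimes\mzero{a}\,\zero{(\mzero{a'})}\otimes\one{(\mzero{a'})}$, which is precisely $(\id\otimes\chi)\big(\mone{a}\mone{a'}\otimes\mzero{a}\otimes_B\mzero{a'}\big)=(\id\otimes\chi)\big(\rho^{A\otimes_B A}(a\otimes_B a')\big)$. This yields the desired intertwining and, combined with Proposition \ref{prop_canMorph}, shows $\chi$ is a morphism in ${}^{K}{}_{A}{\cal M}_{A}{}^{H}$. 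The one bookkeeping subtlety to watch is that the computation is well defined on the balanced tensor product $\otimes_B$, but this is immediate since $B$ is a $K$-subcomodule of $A$ and the $K$-coaction on $A\otimes_B A$ was already defined to descend to the quotient.
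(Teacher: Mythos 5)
Your proposal is correct and follows essentially the same route as the paper's proof: reduce to checking left $K$-colinearity by citing Proposition \ref{prop_canMorph}, then verify $\rho^{A\otimes\underline{H}}\circ\chi=(\id\otimes\chi)\circ\rho^{A\otimes_B A}$ by direct computation using that $\da^A$ is an algebra map together with the compatibility condition \eqref{compatib}. The paper's proof is exactly this computation, so there is nothing to add beyond your (correct) remark on well-definedness over $\otimes_B$.
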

\begin{proof}
Since from Proposition \ref{prop_canMorph} we know that the canonical map $\chi$
is a morphisms in  ${}_A{\cal M}_A{}^H$, we just have to show that it preserves the left $K$-coactions,
i.e.\  $\rho^{A \ot \underline{H}}\circ \chi= (\id \ot \chi)\circ
\rho^{A \ot_B A}$. This is indeed the case:
\begin{flalign*}
\rho^{A \ot \underline{H}}\big(\chi(a \ot_B c)\big)&= \mone{(a\zero{c})} \ot \mzero{(a\zero{c})}  \ot \one{c}\\
&=\mone{a}\mone{(\zero{c})} \ot \mzero{a}\mzero{(\zero{c})}  \ot \one{c}\\
&= \mone{a}\mone{c}\ot \zero{a}\zero{c}\ot \one{c}\\
&= \mone{a} \mone{c} \ot \chi\left( \mzero{a}\ot_B \mzero{c}\right) \\
&= (\id \ot \chi)\big(\rho^{A \ot_B A}(a \ot_B c)\big)~,
\end{flalign*}
where we have used the compatibility condition \eqref{compatib}. 
\end{proof}
\sk

Let us now  briefly present the twist deformation theory of left $K$-modules that
parallels that of right $H$-modules studied in   \S \ref{sec:rightHcomod}.
Given a $2$-cocycle $\sigma:K\otimes K\to\bbK$  on $K$ we deform according to Proposition \ref{prop:co} the Hopf algebra $K$
into the Hopf algebra $K_\sigma$. Every left $K$-comodule $V$ is also
a left
${}_\sigma K$-comodule that we denote by ${}_\sigma V$ (with
coaction $\rho^{ \,{_\sigma\! V}}: {}_\sigma V\to K_\sigma\otimes {}_\sigma
V$, that as a linear map is the same as the coaction  $\rho^{V}: V\to
K\otimes V$). As in (\ref{functGamma}) we have a functor $\Sigma : {}^K{\cal M}\to {}^{K_\sigma}{\cal M}$ between the
categories of left $K$-comodules and left ${}_\sigma K$-comodules.
It is defined on objects by $\Sigma(V)={}_\sigma V$ and on morphisms
$\psi :  V\to W$ by $\Sigma(\psi) :=\psi : {}_\sigma V\to
{}_\sigma W$. 
Similarly  to Theorem \ref{thm:funct} we have
\begin{thm}\label{thm:functleft}
The functor $\Sigma : {}^K{\cal M} \to {}^{K_\sigma}{\cal M}$ induces an equivalence
between the monoidal categories $({}^{K}\mathcal{M}, \ot)$ and $({}^{K_\sigma}\mathcal{M},
\,{{}^{\sigma\!\!\:}\ot}\,)$ that is given by the isomorphisms  
\begin{eqnarray}\label{nt-left} 
\col_{V,W}: {}_\sg V \,{^{\sigma}\ot}\,{}_\sg W &\longrightarrow&  {}_\sg(V \ot W)  ~,
\\
v \,{^{\sigma\!\!\:}\ot}\, w &\longmapsto &  \sig{\mone{v}}{\mone{w} }~\mzero{v} \ot \mzero{w}  ~,\nn
\end{eqnarray}
for all objects $V,W\in {}^K{\cal M}$.
\end{thm}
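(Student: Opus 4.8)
Theorem \ref{thm:functleft} is the left-handed mirror of Theorem \ref{thm:funct}, so the plan is to transport the entire proof of the latter through the standard left–right symmetry, checking only the bookkeeping that differs.

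The plan is to verify two things, exactly as in the proof of Theorem \ref{thm:funct}: first that each $\col_{V,W}$ is an isomorphism in $({}^{K_\sigma}\mathcal{M})$, and second that $(\Sigma,\col)$ is a monoidal functor, i.e.\ that $\col$ is natural and compatible with the associativity and unit constraints. Invertibility is immediate from the convolution-invertibility of $\sigma$: the inverse of $\col_{V,W}$ is $v\ot w\mapsto \sigin{\mone{v}}{\mone{w}}\,\mzero{v}\,{^{\sigma}\ot}\,\mzero{w}$, and $\sigin{}{}\ast\sig{}{}=\varepsilon_{K\otimes K}$ collapses the composite to the identity. The substantive point is that $\col_{V,W}$ is a morphism of left $K_\sigma$-comodules, which I would establish by the same chain of equalities used for $\varphi_{V,W}$, now reading coactions to the left. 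Writing $\rho^{{}_\sigma V\,{}^\sigma\!\ot\,{}_\sigma W}$ according to the left analogue of \eqref{deltaVWcot}, the computation is

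\begin{flalign*}
(\id\ot\col_{V,W})\big(\rho^{{}_\sigma V\,{}^\sigma\!\ot\,{}_\sigma W}(v\,{}^\sigma\!\ot\,w)\big)
&= \mone{v}\mts\mone{w}\ot \sig{\mtwo{v}}{\mtwo{w}}\,\mzero{v}\ot\mzero{w}\\
&= \sig{\mtwo{v}}{\mtwo{w}}\,\mone{v}\mone{w}\,\sigin{\mthree{v}}{\mthree{w}}\ot \mzero{v}\ot\mzero{w}\\
&= \mone{v}\mone{w}\,\sigin{\mtwo{v}}{\mtwo{w}}\ot \mzero{v}\ot\mzero{w}\\
&= \rho^{{}_\sigma(V\ot W)}\big(\col_{V,W}(v\,{}^\sigma\!\ot\,w)\big)~,
\end{flalign*}

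where the second step expands $\mts$ via the left version of \eqref{hopf-twist} and the third cancels the cocycle against its inverse. This mirrors the displayed calculation in Theorem \ref{thm:funct} line for line.

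It then remains to observe that $\col$ is natural in $V$ and $W$ and respects the monoidal constraints; these are diagrammatic and follow formally, exactly as asserted (without separate proof) for $\varphi$ in the right-handed case. For the equivalence I would argue precisely as before: the convolution inverse $\bar\sigma$ is a $2$-cocycle on $K_\sigma$ twisting it back to $K$, so $\Sigma$ has an inverse functor $\overline{\Sigma}$, and $(\Sigma,\col)$ acquires an inverse monoidal functor $(\overline{\Sigma},\overline{\col})$ with $\overline{\col}_{{}_\sigma V,{}_\sigma W}\colon v\ot w\mapsto \sigin{\mone{v}}{\mone{w}}\,\mzero{v}\,{}^\sigma\!\ot\,\mzero{w}$. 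I expect no real obstacle here—the only thing to watch is the order of factors in every left-coaction and in the twisted product $\mts$, since reversing handedness swaps which tensor leg carries the "incoming" Hopf-algebra element; getting those placements consistent with the definition of $\col_{V,W}$ in \eqref{nt-left} is the one place where a sign-of-convention slip could creep in, and it is the step I would double-check against the right-handed proof.
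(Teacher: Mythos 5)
Your overall plan is the right one --- and it is in fact the paper's own: the paper gives no separate proof of Theorem \ref{thm:functleft}, introducing it with ``Similarly to Theorem \ref{thm:funct} we have'', so mirroring that proof is exactly what is intended; your formula for $\col_{V,W}^{-1}$ and your inverse monoidal functor $(\overline{\Sigma},\overline{\col})$ are also correct. However, the one substantive verification --- the displayed chain showing that $\col_{V,W}$ is a morphism of left $K_\sigma$-comodules --- is wrong as written, and it fails at precisely the handedness bookkeeping you flagged at the end. The paper's conventions are not symmetric under a blind relabelling $\gamma\mapsto\sigma$: the right-handed isomorphism $\varphi_{V,W}$ in \eqref{nt} is built from the \emph{inverse} cocycle $\bar\gamma$, whereas the left-handed $\col_{V,W}$ in \eqref{nt-left} is built from $\sigma$ itself (just as $\bullet_\gamma$ uses $\bar\gamma$ while ${}_\sigma\bullet$ uses $\sigma$). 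Hence the right-hand side of your computation must end with a $\sig{\cdot}{\cdot}$ factor, not a $\sigin{\cdot}{\cdot}$: one has $\rho^{{}_\sg(V\ot W)}\big(\col_{V,W}(v\,{}^{\sigma}\!\ot w)\big)=\sig{\mtwo{v}}{\mtwo{w}}\,\mone{v}\mone{w}\ot\mzero{v}\ot\mzero{w}$, which is not your third line. Concretely: (i) your second line has lost a factor, since expanding $\mts$ produces three factors $\sigma\,[\,\cdot\,]\,\bar\sigma$ which together with the $\sigma$ already present from $\col_{V,W}$ gives four, not three; (ii) your third step claims to ``cancel the cocycle against its inverse'' but removes only a $\sigma$ and keeps a $\bar\sigma$, which is not how the convolution identity $\bar\sigma\ast\sigma=\varepsilon_{K\ot K}$ acts --- it removes a properly paired $\bar\sigma$--$\sigma$ pair; (iii) the final equality is therefore false. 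In short, you transcribed the right-handed chain symbol by symbol instead of swapping the roles of $\sigma$ and $\bar\sigma$.

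The repaired computation (writing iterated left coactions as $\mtwo{v}\ot\mone{v}\ot\mzero{v}$, most negative index leftmost, mirroring the paper's right-handed convention) reads
\begin{align*}
(\id\ot\col_{V,W})\big(\rho^{{}_\sg V\,{}^{\sigma}\!\ot\,{}_\sg W}(v\,{}^{\sigma}\!\ot w)\big)
&= \mtwo{v}\mts\mtwo{w}\ot \sig{\mone{v}}{\mone{w}}\,\mzero{v}\ot\mzero{w}\\
&= \sig{\mfour{v}}{\mfour{w}}\,\mthree{v}\mthree{w}\,\sigin{\mtwo{v}}{\mtwo{w}}\,\sig{\mone{v}}{\mone{w}}\ot \mzero{v}\ot\mzero{w}\\
&= \sig{\mtwo{v}}{\mtwo{w}}\,\mone{v}\mone{w}\ot \mzero{v}\ot\mzero{w}\\
&= \rho^{{}_\sg(V\ot W)}\big(\col_{V,W}(v\,{}^{\sigma}\!\ot w)\big)~,
\end{align*}
where the second equality expands $\mts$ and the third uses $\sigin{\mtwo{v}}{\mtwo{w}}\,\sig{\mone{v}}{\mone{w}}=\varepsilon(\mone{v})\,\varepsilon(\mone{w})$, i.e.\ $\bar\sigma\ast\sigma=\varepsilon_{K\ot K}$ applied to adjacent coproduct legs: the $\bar\sigma$ coming from $\mts$ annihilates the $\sigma$ coming from $\col_{V,W}$, and a single $\sigma$ survives on the outermost legs. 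This is the true mirror of the right-handed proof, where instead the $\gamma$ from $\cdot_\gamma$ annihilates one of the two $\bar\gamma$'s and a $\bar\gamma$ survives. With this correction the remainder of your argument (naturality, monoidal constraints, untwisting via the $2$-cocycle $\bar\sigma$ on $K_\sigma$) goes through as you describe.
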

Similarly $(K,H)$-bicomodules are deformed in  
$({}_\sigma K,H)$-bicomodules so   that the corresponding
functor  $\Sigma :  {}^K{\cal M}^H \to
{}^{K_\sigma}{\cal M}^H$ induces as well an equivalence
between the monoidal categories $({}^{K}\mathcal{M}^H, \ot)$ and $({}^{K_\sigma}\mathcal{M}^H,
\,{{}^{\sigma\!\!\:}\ot}\,)$. This equivalence is given by the 
isomorphisms (\ref{nt-left}) that now are
isomorphisms in ${}^{K_\sigma}\!\mathcal{M}^H$, i.e., 
$({}K_\sigma,H)$-bicomodule isomorphisms.

The left $(K,H)$-comodule algebra $A$ is also deformed into a left
$(K_\sigma,H)$-comodule algebra ${}_\sigma A$, with product
\begin{equation}
{}_\sigma m : {}_\sigma A\,{^{\sigma}\otimes}\,{}_\sigma A \longrightarrow {}_\sigma A~~,
~~~ a\,{{}^{\sigma\!\!\:}\otimes}\, a^\prime \longmapsto \sig{\mone{a}}{\mone{a'}} \, \mzero{a} \mzero{a'} =: a \,{{}_\sigma\bullet}\, a' ~.
\end{equation}
Consequently relative Hopf modules $V\in  {}^K{}_{A}{\cal M}_{A}{}^H$ are
deformed in relative Hopf modules ${}_\sigma V\in
{}^{K_\sigma}{}_{{}_\sigma A}{\cal M}_{{}_\sigma A}{}^H$ so that the
corresponding functor $\Sigma : {}^K{}_{A}{\cal M}_{A}{}^H \to {}^{K_\sigma}{}_{{}_\sigma A}{\cal M}_{{}_\sigma A}{}^H$
induces and equivalence of the categories $ {}^K{}_{A}{\cal
  M}_{A}{}^H $ and $ {}^{K_\sigma}{}_{{}_\sigma A}{\cal M}_{{}_\sigma A}{}^H$.
The left and right  ${}_\sigma A$-actions explicitly read (cf. (\ref{av}))
\begin{flalign}\label{eqn:leftleft}
\triangleright_{{}_\sigma V} : {}_\sigma A\, {{}^{\sigma\!\!\:} \otimes}\,{}_\sigma V &\longrightarrow {}_\sigma V~,\\
\nn a\,{{}^{\sigma\!\!\:}\otimes}\, v&\longmapsto
\sig{\mone{a}}{\mone{v}}\,\mzero{a}\tra_V \mzero{v}~,\nn\\
\triangleleft_{{}_\sigma V} : {}_\sigma V\, {{}^{\sigma\!\!\:} \otimes}\,{}_\sigma A &\longrightarrow {}_\sigma V~,\nn\\
\nn v\,{{}^{\sigma\!\!\:}\otimes}\, a&\longmapsto \sig{\mone{v}}{\mone{a}}\,\mzero{v}\trl_V \mzero{a}~.
\end{flalign}
\sk
Given the $(K_\sigma,H)$-bicomodule algebra ${}_\sigma A$
we consider the subalgebra of $H$-coinvariant elements $({}_\sigma A)^{coH}$
that is easily seen to equal  ${}_\sigma
B:={}_\sigma(A^{coH})$, the twist deformation of the $K$-subcomodule algebra
$B\subset A$ of $H$-coinvariant elements, i.e., the deformed ``base space''.
 As a consequence we have the twisted canonical map ${}_\sigma \chi : {}_\sigma A \, {{}^{\sigma\!\!\:}\ot_{{}_\sigma B}}\, {}_\sigma A \to {}_\sigma A \,{{}^{\sigma}\otimes} \, \underline{H}\,$,
which by Proposition \ref{canmapKmorph} is a morphism in ${}^{K_\sigma}{}_{{}_\sigma A}{\cal M}_{{}_\sigma A}{}^{H}$.
We now  relate the twisted canonical map ${}_\sigma\chi$ with the original canonical map $\chi$.
\begin{thm}\label{Th:diagr-can2} Let $A$ be a $(K,H)$-bicomodule
  algebra, and  $B=A^{coH}$.
Given a $2$-cocycle $\sigma : K\otimes K\to \bbK$ the diagram
\begin{equation}\label{cd}
\xymatrix{
 \pgls \, {{}^{\sigma\!\!\:}\ot_{{}_\sigma B}}\, \pgls  \ar[rr]^-{{}_\sigma\chi} \ar[d]_{\col_{A,A}}  && \pgls \,{{}^{\sigma\!\!\:}\ot}\, \underline{H} \ar[d]^{\col_{A,\underline{H}} } 
\\
{}_\sg(A \ot_B A) \ar[rr]^{\Sigma(\chi)} && {}_\sg(A \ot \underline{H})
 }
\end{equation}
in ${}^{K_\sigma}{}_{{}_\sigma A}{\cal M}_{{}_\sigma A}{}^{H}$
commutes.
\end{thm}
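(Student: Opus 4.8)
The plan is to chase a generic element $a \,{{}^{\sigma\!\!\:}\ot_{{}_\sigma B}}\, a'$ through the two legs of the square. Every arrow is already a morphism in ${}^{K_\sigma}{}_{{}_\sigma A}{\cal M}_{{}_\sigma A}{}^{H}$: the twisted canonical map ${}_\sigma\chi$ by Proposition \ref{canmapKmorph} applied to the $(K_\sigma,H)$-bicomodule algebra ${}_\sigma A$, the vertical isomorphisms $\col_{A,A}$ and $\col_{A,\underline{H}}$ by the relative-Hopf-module version of Theorem \ref{thm:functleft}, and $\Sigma(\chi)=\chi$ by functoriality of $\Sigma$. Hence it suffices to prove that the two composites agree as $\bbK$-linear maps.

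The structural point, in contrast with the ``structure group'' deformation of Theorem \ref{theo:diagr-can-pre}, is that no analogue of the twisting isomorphism $\Q$ is needed here: the deformation is carried by the external symmetry $K$, which coacts trivially on $\underline{H}$ via $\da^{\underline{H}}(h)=1_K\otimes h$. Consequently I expect the right vertical arrow to reduce to the identity on underlying vectors, since $\col_{A,\underline{H}}(a \,{{}^{\sigma\!\!\:}\ot}\, h)=\sig{\mone{a}}{1_K}\,\mzero{a}\otimes h=\varepsilon(\mone{a})\,\mzero{a}\otimes h=a\otimes h$ by unitality of $\sigma$ and the counit axiom for $\da^A$. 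This is precisely why the square needs no correction map.

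For the top-right leg I would first write out the twisted canonical map, recalling that the right $H$-coaction is undeformed so that its Sweedler legs are unchanged: ${}_\sigma\chi(a \,{{}^{\sigma\!\!\:}\ot_{{}_\sigma B}}\, a')=(a \,{{}_\sigma\bullet}\, \zero{a'}) \,{{}^{\sigma\!\!\:}\ot}\, \one{a'}=\sig{\mone{a}}{\mone{(\zero{a'})}}\,(\mzero{a}\,\mzero{(\zero{a'})}) \,{{}^{\sigma\!\!\:}\ot}\, \one{a'}$. Applying $\col_{A,\underline{H}}$, which acts as the identity by the previous paragraph, gives $\sig{\mone{a}}{\mone{(\zero{a'})}}\,(\mzero{a}\,\mzero{(\zero{a'})})\otimes \one{a'}$. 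For the left-down leg, $\col_{A,A}(a \,{{}^{\sigma\!\!\:}\ot_{{}_\sigma B}}\, a')=\sig{\mone{a}}{\mone{a'}}\,\mzero{a}\otimes_B\mzero{a'}$, and since $\Sigma(\chi)=\chi$ acts by $\chi(u\otimes_B v)=u\,\zero{v}\otimes\one{v}$, I obtain $\sig{\mone{a}}{\mone{a'}}\,\mzero{a}\,\zero{(\mzero{a'})}\otimes\one{(\mzero{a'})}$.

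The final step is to identify these two expressions using the bicomodule compatibility \eqref{compatib}, which in Sweedler form reads $\mone{(\zero{a'})}\otimes\mzero{(\zero{a'})}\otimes\one{a'}=\mone{a'}\otimes\zero{(\mzero{a'})}\otimes\one{(\mzero{a'})}$ (the left $K$-coaction in the first leg, the element of $A$ in the second, the right $H$-coaction in the third). Substituting this into the top-right expression turns it verbatim into the left-down one, which closes the square. I expect the only delicate aspect to be the bookkeeping of the two Sweedler conventions --- lower indices for the left $K$-coaction and upper indices for the right $H$-coaction --- together with applying \eqref{compatib} in the correct direction; there is no genuine obstacle of the kind overcome by the construction of $\Q$ in the $\cot$-case.
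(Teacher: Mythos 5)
Your proposal is correct and follows essentially the same route as the paper's proof: both hinge on the observation that $\col_{A,\underline{H}}$ is the identity (trivial left $K$-coaction on $\underline{H}$ plus unitality of $\sigma$) and then close the square by an element chase using the bicomodule compatibility \eqref{compatib}, with all arrows being morphisms in ${}^{K_\sigma}{}_{{}_\sigma A}{\cal M}_{{}_\sigma A}{}^{H}$ by Proposition \ref{canmapKmorph}. The only point the paper makes explicit that you leave implicit is that $\col_{A,A}$ is well defined as a map on the quotient ${}_\sigma A\,{{}^{\sigma\!\!\:}\ot_{{}_\sigma B}}\,{}_\sigma A\to{}_\sg(A\ot_B A)$, which follows from the cocycle condition \eqref{lcocycle} for $\sigma$.
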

\begin{proof}
First we notice that the left vertical arrow is the induction to
the quotient of the isomorphism $\col_{A,A} : \pgls \, {{}^{\sigma\!\!\:}\ot}\,
\pgls\longrightarrow {}_\sg(A \ot A)$ defined in (\ref{nt-left}); it is well defined 
 thanks to the cocycle condition (\ref{lcocycle}) for $\sigma$.
Next let us observe that $\col_{A,\underline{H}}$ is  the identity; indeed, 
since $\underline{H}$ is equipped with the trivial left $K$-coaction 
$h \mapsto \1_K \ot h$ and $\sg$ is unital, we have
$$
\col_{A,\underline{H}} (a \,{{}^{\sigma\!\!\:}\ot}\, h)= \sig{\mone{a}}{\mone{h}} \mzero{a} \ot \mzero{h}=
\sig{\mone{a}}{\1_K} \mzero{a} \ot {h} = a \ot h~,
$$
for all $a\in{}_\sigma A$ and $h\in \underline{H}$. These vertical
arrows are easily seen to be morphisms in ${}^{K_\sigma}{}_{{}_\sigma A}{\cal M}_{{}_\sigma A}{}^{H}$.
Furthermore the horizontal arrows in the diagram are also 
morphism in ${}^{K_\sigma}{}_{{}_\sigma A}{\cal M}_{{}_\sigma
  A}{}^{H}$ (cf. Proposition \ref{canmapKmorph}) so that 
all arrows are morphisms in ${}^{K_\sigma}{}_{{}_\sigma A}{\cal M}_{{}_\sigma A}{}^{H}$.
It remains to prove the commutativity of the diagram:
\begin{flalign*}
\chi \big(\col_{A,A} (a \, {{}^{\sigma\!\!\:}\ot_{{}_\sigma B}}\, a') \big) &= \sig{\mone{a}}{\mone{a'}} ~ \chi(\mzero{a} \ot_B \mzero{a'}) \\
&=\sig{\mone{a}}{\mone{a'}} ~ \mzero{a}  \zero{(\mzero{a'})} \ot
\one{(\mzero{a'})}
\\
&= \sig{\mone{a}}{\mone{(\zero{a'})}} ~ \mzero{a}  \mzero{(\zero{a'})} \ot
\one{{a'}}\\
&=
a \, {\mtcols}\, \zero{a'}  \ot
\one{{a'}}\\
&= {}_\sigma \chi(a \, {{}^{\sigma\!\!\:}\ot_{{}_\sigma B}}\, a')~,
\end{flalign*}
for all $a,a^\prime\in {}_\sigma A$.
\end{proof}

Since the vertical arrows $\varphi_{A,\underline{H}}$ and $\varphi_{A,A}$
in diagram (\ref{cd}) are isomorphisms
then we immediately have that an horizontal arrow in (\ref{cd}) is an isomorphism if
and only if the other horizontal arrow is, i.e., 
\begin{cor}\label{cor:diagr-can2}
$B \subset A$ is an $H$-Hopf-Galois extension if and only if $\bgls
\subset \pgls$ is an $H$-Hopf-Galois extension.
\end{cor}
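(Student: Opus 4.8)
The plan is to read the corollary straight off the commutative diagram (\ref{cd}) of Theorem \ref{Th:diagr-can2}, using the elementary categorical fact that in a commuting square whose two vertical arrows are isomorphisms, one horizontal arrow is bijective if and only if the other is. By Definition \ref{def:hg}, the extension $B\subset A$ is $H$-Hopf-Galois exactly when the canonical map $\chi$ is bijective, and $\bgls\subset\pgls$ is $H$-Hopf-Galois exactly when the twisted canonical map ${}_\sigma\chi$ is bijective; so the whole task reduces to relating bijectivity of $\chi$ and of ${}_\sigma\chi$.

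First I would note that the bottom arrow $\Sigma(\chi)$ equals $\chi$ as a $\bbK$-linear map, since by construction the functor $\Sigma$ alters only the $K_\sigma$-comodule structures and leaves the underlying linear maps unchanged (in complete analogy with the identity $\Gamma(\chi)=\chi$ used on the $H$-side). Hence $\chi$ is bijective if and only if $\Sigma(\chi)$ is. Next, by Theorem \ref{thm:functleft} the vertical arrows $\col_{A,A}$ and $\col_{A,\underline{H}}$ of (\ref{cd}) are isomorphisms, so commutativity of the diagram yields
\begin{equation*}
{}_\sigma\chi=\col_{A,\underline{H}}^{-1}\circ\Sigma(\chi)\circ\col_{A,A}~,
\end{equation*}
exhibiting ${}_\sigma\chi$ as a composite of $\Sigma(\chi)$ with two isomorphisms. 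Therefore ${}_\sigma\chi$ is bijective if and only if $\Sigma(\chi)$ is, and chaining the two equivalences gives that ${}_\sigma\chi$ is bijective precisely when $\chi$ is, which is the assertion.

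I expect no genuine obstacle here, as all the substantive content---the construction of the isomorphisms $\col$ and the verification that (\ref{cd}) commutes---has already been supplied by Theorems \ref{thm:functleft} and \ref{Th:diagr-can2}. The only points worth a remark are that $\col_{A,\underline{H}}$ is honestly invertible (indeed it is the identity, as observed in the proof of Theorem \ref{Th:diagr-can2}, because $\underline{H}$ carries the trivial $K$-coaction and $\sigma$ is unital), and that the induced left vertical map $\col_{A,A}$ descends to an isomorphism on the quotient by ${}_\sigma B$; the latter uses only that $\sigma$ is convolution invertible, so that the inverse of $\col_{A,A}$ is likewise well defined on the quotient.
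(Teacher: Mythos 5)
Your proposal is correct and follows essentially the same route as the paper: the paper likewise deduces the corollary directly from diagram (\ref{cd}) of Theorem \ref{Th:diagr-can2}, observing that since the vertical arrows $\col_{A,A}$ and $\col_{A,\underline{H}}$ are isomorphisms, one horizontal arrow is bijective if and only if the other is, with $\Sigma(\chi)=\chi$ as a linear map. Your additional remarks (that $\col_{A,\underline{H}}$ is in fact the identity, and that $\col_{A,A}$ descends to the quotient) are precisely the points already established in the paper's proof of Theorem \ref{Th:diagr-can2}.
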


Finally it is also possible to prove that

\begin{cor}\label{cor2:diagr-can2}
$A$ is a principal $H$-comodule algebra if and only if ${}_\sigma A$ is a principal $H$-comodule algebra.
\end{cor}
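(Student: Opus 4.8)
The plan is to use the characterization of a principal $H$-comodule algebra as an $H$-Hopf-Galois extension $B=A^{coH}\subset A$ that is in addition equivariantly projective as a left $B$-module, and to transfer each of these two conditions along the $\sigma$-twist exactly as in Corollary \ref{cor-pcomodalg}, but now with the functor $\Sigma$ and the natural isomorphism $\col$ of Theorem \ref{thm:functleft} in place of $\Gamma$ and $\varphi$. The Hopf-Galois half is already available: by Corollary \ref{cor:diagr-can2} the canonical map $\can$ is bijective if and only if ${}_\sigma\can$ is, so $B\subset A$ is an $H$-Hopf-Galois extension precisely when $\bgls\subset\pgls$ is one. Everything therefore reduces to transferring equivariant projectivity, i.e.\ to turning a left $B$-module and right $H$-comodule section $s:A\to B\ot A$ of the product into a left $\bgls$-module and right $H$-comodule section ${}_\sigma s:\pgls\to\bgls\,{{}^{\sigma\!\!\:}\ot}\,\pgls$ of ${}_\sigma m$, and back.

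Mirroring Corollary \ref{cor-pcomodalg}, I would set ${}_\sigma s:=(\col_{B,A})^{-1}\circ\Sigma(s)$. The point is that $A$, with the multiplication $B\ot A\to A$ as left action, is a left $B$-module object in the monoidal category $({}^{K}\mathcal{M}^{H},\ot)$, since this multiplication is both left $K$-colinear and right $H$-colinear, and $(\Sigma,\col):({}^{K}\mathcal{M}^{H},\ot)\to({}^{K_\sigma}\mathcal{M}^{H},\,{{}^{\sigma\!\!\:}\ot}\,)$ is a monoidal equivalence. A monoidal functor sends module objects to module objects and module morphisms to module morphisms, so as soon as $s$ is recognised as a morphism in ${}^{K}\mathcal{M}^{H}$ the coherence of $\col$ forces ${}_\sigma s$ to be left $\bgls$-linear and right $H$-colinear, while ${}_\sigma m\circ{}_\sigma s=\id$ follows from $m\circ s=\id$ by functoriality.

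The main obstacle is exactly the hypothesis flagged in the previous sentence. Definition \ref{def:pHcomodalg} only provides a section $s$ that is left $B$-linear and right $H$-colinear, whereas in order to apply the $K$-functor $\Sigma$ I also need $s$ to be left $K$-colinear, i.e.\ a genuine morphism in ${}^{K}\mathcal{M}^{H}$. This is the real difference with Corollary \ref{cor-pcomodalg}, where the twist acted on $H$ and the section was $H$-colinear by definition, while here the twist acts on the external symmetry $K$, for which no equivariance of $s$ is postulated. So the crux is to upgrade equivariant projectivity of the $(K,H)$-bicomodule algebra $A$ to a $K$-equivariant section. My route would be to observe that, by Proposition \ref{canmapKmorph}, $\can$ and hence $\can^{-1}$ are morphisms in ${}^{K}{}_{A}\mathcal{M}_{A}{}^{H}$, so that the translation map $\tau:=\can^{-1}(1_A\ot-):\underline{H}\to A\ot_B A$ takes values in the $K$-coinvariants (indeed $\rho^A(1_A)=1_K\ot 1_A$ and $\underline{H}$ carries the trivial $K$-coaction, whence $\rho^{A\ot_B A}(\tau(h))=1_K\ot\tau(h)$). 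One then lifts $\tau$ to a $K$-coinvariant strong connection $\ell:\underline{H}\to A\ot A$, using that strong connections form an affine space and that both the multiplication and the quotient map $A\ot A\to A\ot_B A$ are $K$-colinear, and reads off from $\ell$ a left $K$-colinear section $s$; this equivariant lifting is the step that requires genuine work and is carried out in \cite{ABPS}. With such an $s$ in hand the two preceding paragraphs conclude one implication, and the converse follows by running the same argument with the convolution inverse cocycle $\bar\sigma$, which twists $\pgls$ back to $A$ and $\bgls$ back to $B$.
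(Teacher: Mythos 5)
You have correctly isolated the crux of this corollary --- the section $s$ of Definition \ref{def:pHcomodalg} is only left $B$-linear and right $H$-colinear, with no $K$-colinearity postulated, so $\Sigma(s)$ is meaningless --- and your handling of the Hopf-Galois half (Corollary \ref{cor:diagr-can2}) and of the converse direction (untwisting by $\bar\sigma$) is fine. But your fix for the crux contains a genuine gap. You propose to \emph{equivariantize}: lift the $K$-coinvariant translation map $\tau=\can^{-1}(1_A\ot -)$ through the quotient $A\ot A\to A\ot_B A$ to a $K$-coinvariant strong connection and read off a $K$-colinear section. That lifting is precisely the problematic step: the restriction of the quotient map to $K$-coinvariants, $(A\ot A)^{coK}\to (A\ot_B A)^{coK}$, need not be surjective for a general Hopf algebra $K$, since the coinvariants functor $\Hom^K(\bbK,-)$ is only left exact; and producing a $K$-fixed point in the (nonempty) affine space of strong connections requires an averaging device, i.e.\ essentially cosemisimplicity of $K$ (compactness of $L$). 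No such hypothesis is assumed here, and in the paper's main intended application (Example \ref{FDQ}) one has $K=C^\infty(L)[[\hbar]]$ with $L$ possibly noncompact (e.g.\ the Poincar\'e group), so your argument would not cover it --- though it would suffice for the instanton example, where $K=\mathcal{O}(\mathbb{T}^2)$ is cosemisimple. Finally, the claim that ``this equivariant lifting is carried out in \cite{ABPS}'' is a misattribution: no equivariantization of $s$ is performed there.

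What the paper (and \cite{ABPS}) actually does is deform the \emph{non-equivariant} section directly. The role of $\Sigma$ on morphisms is taken over by the map $\fS$, defined for $a\in{}_\sigma A$ by $\fS(s)(a)=\sigma\big(\mtwo{a}\ot S(\mone{a})\,\mone{m(s(\zero{a}))}\big)\,s(\zero{a})$, which uses the $K$-coactions and the cocycle $\sigma$ to correct $s$; composing $\fS(s)$ with the inverse coherence isomorphism $(\col_{B,A})^{-1}$ then yields a section ${}_\sigma s$ of the deformed multiplication ${}_\sigma B\,{{}^{\sigma\!\!\:}\ot}\,{}_\sigma A\to{}_\sigma A$. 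The conceptual origin of $\fS$ is that the equivalence $(\Sigma,\col)$ is an equivalence of \emph{closed} monoidal categories (cf.\ Remark \ref{dualpaired} and \cite{BSS}, \cite{AS}): it acts not only on $K$-colinear morphisms but on internal homomorphisms, i.e.\ on arbitrary linear maps such as $s$, at the price of the explicit $\sigma$-dependent correction above. This internal-hom deformation is exactly the tool missing from your proposal; without it, or without an added hypothesis such as cosemisimplicity of $K$, your equivariant-lifting step does not go through.
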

\noindent The key part of the proof is to show that given a section  
$s: A\to B\otimes A$ of the multiplication map $m: B\otimes A\to A$, as in Definition \ref{def:pHcomodalg},  then 
${}_\sigma s:=\fS(s)\circ ({\varphi^\ell_{B,A}})^{-1}: {}_\sigma A\to
{}_\sigma B{}^\sigma\otimes \/{}_\sigma A$ is a section of the
deformed multiplication map $ {}_\sigma B {\,}^{\sigma\!\!}\otimes \,{}_\sigma A\to {}_\sigma A$.
Here $\fS(s):  {}_\sigma A\to {}_\sigma (B\otimes  A)$ is defined by,
for all $a\in {}_\sigma A$,
\begin{equation}
\fS(s) (a)=\sigma\big(\mtwo{a}\ot
S(\mone{a})\,\mone{m(s(\zero{a}))}\big)\,{s(\zero{a})}
\end{equation}
and, similarly to Remark \ref{dualpaired}, is related to the natural isomorphism
proving that the categories of Hopf modules and of twisted Hopf
modules are equivalent as closed monoidal categories.

\begin{ex}[The instanton bundle on the noncommutative sphere $S^4_\theta$]\label{exCL}
In this example we describe the
$SU(2)$-principal bundle $S^7 \ra S^4$ as an Hopf-Galois extension
and then twist deform it to the  Hopf-Galois extension describing the
instanton bundle on the noncommutative sphere $S^4_\theta$ \cite{gw, gs}.

Let $A:= \mathcal{O}(S^7)$ be the  algebra over ${\mathbb C}$ of coordinate functions
on the $7$-sphere $S^7$, it is generated by the elements
$\{z_i,~z_i^*, ~i=1,\dots ,4\}$ modulo the relation $\sum z^*_i
z_i=1$. It is a $*$-algebra with involution $*: z\mapsto z^*$
extended as an antilinear and antimultipicative map to all of   $\mathcal{O}(S^7)$.
Let $H:=\mathcal{O}(SU(2))$ be the 
Hopf algebra of coordinate functions on $SU(2)$ realized as the
$*$-algebra generated by commuting elements 
$\{w_i,~w_i^*, ~i=1,2\}$ with $\sum w^*_i w_i=1$ and standard Hopf
algebra structure induced from the group structure of $SU(2)$, i.e.,
setting 
\begin{equation}
T=(T^i_{~j})=\begin{pmatrix}
 w_1 & -w_2^*
\vspace{2pt}
\\
w_2 & w_1^*\end{pmatrix} ~,
\end{equation}
$\,\Delta(T^i_{~j})=T^i_{~k}\otimes T^k_{~j}$ (sum over $k$
understood), that we rewrite in matrix notation as
$\Delta(T)=T \overset{.}{\otimes}T$ (where $\overset{.}{\otimes}$
denotes tensor product and matrix multiplication),
$\varepsilon(T^i_{~j})=\delta^i_j$ and $S(T^i_{~j})=(T^{-1})^i_{~j}$, i.e., in matrix
notation $S(T)=T^{-1}$.
The  action of $SU(2)$ on $S^7$ pulls back to the
right coaction of $\mathcal{O}(SU(2))$ on  
$\mathcal{O}(S^7)$: 
\begin{equation}\label{princ-coactSU2}
\delta^{\mathcal{O}(S^7)}:\quad  \mathcal{O}(S^7)   \longrightarrow  
\mathcal{O}(S^7) \ot \mathcal{O}(SU(2))~;
\end{equation}
on the matrix of generators of $\mathcal{O}(S^7) $
\[u:=
\begin{pmatrix}
z_1& z_2 & z_3& z_4
\vspace{2pt}
\\
-z_2^*  &
  z_1^* &
  -z_4^*
& z_3^*
\end{pmatrix}^t 
\] 
it simply reads
$\delta^{\mathcal{O}(S^7)}(u)=
u
\overset{.}{\otimes}T$ and is extended to the whole $\mathcal{O}(S^7)$  as a $*$-algebra morphism.  The 
 subalgebra $B:=\mathcal{O}(S^7)^{co(\mathcal{O}(SU(2)))}\subset\mathcal{O}(S^7)$ of coinvariants 
under the coaction $\delta^{\mathcal{O}(S^7)}$ 
is generated by the elements \be\label{4sphere-coinv}
\alpha:= 2(z_1 z_3^* + z^*_2 z_4)~, \quad 
\beta:= 2(z_2 z_3^* - z^*_1 z_4)~, \quad 
x:= z_1 z_1^* + z_2 z_2^* - z_3 z_3^* -z_4 z_4^* ~, \quad 
\ee 
and their $*$-conjugated $\alpha^*, \beta^*$.  Form the $7$-sphere 
relation $\sum z_i^* z_i=1$ 
it follows that they satisfy 
$$
\alpha^* \alpha + \beta^* \beta + x^2=1 
$$
and therefore, as expected,  these elements 
generate the algebra of coordinate  functions  on the 
 4-sphere $S^4$; thus the subalgebra $B$ of coinvariants is isomorphic to the algebra 
$\mathcal{O}(S^4)$ of coordinate  functions  on  $S^4$. Since $S^7 \to
S^4$ is a principal $SU(2)$-bundle then $\mathcal{O}(S^4)\subset
\mathcal{O}(S^7)$ is a Hopf-Galois extension. Moreover, since $\mathcal{O}(SU(2))$ is
cosemisimple and has a bijective antipode, then $\mathcal{O}(S^7)$  is a 
principal comodule algebra (recall the paragraph after  Definition \ref{def:pHcomodalg}).\\
 
We twist deform this Hopf-Galois extension by using as external
symmetry of the instanton bundle the (abelian) Lie group $\mathbb{T}^2$. 
Let $K:=\mathcal{O} (\mathbb{T}^2) $ be the  corresponding commutative
Hopf algebra of functions with generators $t_j,~t_j^*=t_j^{-1}$,
$j=1,2$  and co-structures $\Delta(t_i)=t_i \ot t_i$,
$\varepsilon(t_i)=1$, $S(t_i)=t_i^{-1}=t_i^*$.
The action of $\mathbb{T}^2$ on $S^7$ pulls back to a left coaction of $\mathcal{O} (\mathbb{T}^2) $ on the  algebra $\mathcal{O}(S^7)$: it is
 given on the generators as 
\be\label{coazioneT-S7}
\da^{\mathcal{O}(S^7)}:  \mathcal{O}(S^7) \longrightarrow \mathcal{O} (\mathbb{T}^2)  \ot \mathcal{O}(S^7) ~,
\quad z_i \longmapsto \tau_i \ot z_i ~,
\ee
where $(\tau_i):=(t_1,t_1^*,t_2,t^*_2)$, 
 and it is extended to the whole of $\mathcal{O}(S^7)$ as a $*$-algebra homomorphism. 
It is easy to prove that the $SU(2)$ and the $\mathbb{T}^2$ coactions 
$\delta^{\mathcal{O}(S^7)}$ and $\da^{\mathcal{O}(S^7)}$ satisfy the compatibility condition \eqref{compatib},
hence they structure $\mathcal{O}(S^7)$ as a
$(\mathcal{O}(\mathbb{T}^2),\mathcal{O}(SU(2)))$-bicomodule algebra.
The subalgebra $\mathcal{O}(S^4)$ of
$\mathcal{O}(SU(2))$-coinvariants is a
$\mathcal{O}(\mathbb{T}^2)$-subcomodule algebra with $\mathcal{O}
(\mathbb{T}^2) $-coaction 
\be\label{coazioneT-S4}
\alpha \longmapsto t_1 t_2^* \ot \alpha~,  \quad 
\beta \longmapsto t_1^* t_2^* \ot \beta ~, \quad 
x \longmapsto 1 \ot x ~.
\ee

 Let $\sigma$ be
the  2-cocycle on $K$ defined on the generators by:
\be\label{cocycleT2}
\sig{t_j}{t_k}= \exp(i \pi \,\Theta_{jk}) ~~,\quad \Theta= \frac{1}{2}\begin{pmatrix} 0 & \theta 
\\
- \theta & 0 \end{pmatrix}  ~~,\quad \theta \in \mathbb{R}~
\ee
and  extended to the whole algebra by requiring 
$\sig{ab}{c}=\sig{a}{\one{c}}\sig{b}{\two{c}}$ and 
 $\sig{a}{bc}=\sig{\one{a}}{c}\sig{\two{a}}{b}$, for all $a,b,c, \in \mathcal{O} (\mathbb{T}^n)$.

We can now apply the theory of deformation  by 2-cocycles to both the comodule algebras $\mathcal{O}(S^7)$ and $\mathcal{O}(S^4)$. The resulting noncommutative algebras, denoted respectively by $\mathcal{O}(S^7_\theta)$ and $\mathcal{O}(S^4_\theta)$, are two representatives of the class of  $\theta$-spheres in \cite{cl}.
In particular, the Hopf-Galois  extension
$\mathcal{O}(S^4)\simeq \mathcal{O}(S^7)^{coH}\subset \mathcal{O}(S^7)$ deforms to the Hopf-Galois
extension  $\mathcal{O}(S^4_\theta)\simeq
\mathcal{O}(S^7_\theta)^{coH}\subset \mathcal{O}(S^7_\theta)$ with undeformed  structure 
Hopf algebra $H=\mathcal{O}(SU(2))$. Actually, from Corollary \ref{cor2:diagr-can2}, we further obtain
\begin{prop}\label{propLS}
The algebra $\mathcal{O}(S^7_\theta)$ is a  principal $\mathcal{O}(SU(2))$-comodule algebra. 
\end{prop}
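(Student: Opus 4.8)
The plan is to deduce the statement immediately from Corollary \ref{cor2:diagr-can2}, once the relevant structures are correctly identified. The essential point is that the passage from $\mathcal{O}(S^7)$ to $\mathcal{O}(S^7_\theta)$ is a ``base space'' deformation: it is the twist ${}_\sigma A$ of $A = \mathcal{O}(S^7)$ by the $2$-cocycle $\sigma$ of (\ref{cocycleT2}) on the external symmetry $K = \mathcal{O}(\mathbb{T}^2)$, whereas the structure Hopf algebra $H = \mathcal{O}(SU(2))$ is left undeformed. Thus the hypotheses of Corollary \ref{cor2:diagr-can2} are precisely those already assembled in this example, and the proof will consist in verifying them and invoking the corollary.

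First I would record the two inputs the corollary requires. The first is that $A = \mathcal{O}(S^7)$ is a $(K,H)$-bicomodule algebra: the right $H$-coaction $\delta^{\mathcal{O}(S^7)}$ of (\ref{princ-coactSU2}) and the left $K$-coaction $\da^{\mathcal{O}(S^7)}$ of (\ref{coazioneT-S7}) satisfy the compatibility condition (\ref{compatib}), as noted above. The second is that $A = \mathcal{O}(S^7)$ is already a principal $H$-comodule algebra: since $S^7 \to S^4$ is a classical principal $SU(2)$-bundle, $\mathcal{O}(S^4) = A^{coH} \subset A$ is an $H$-Hopf-Galois extension, and because $H = \mathcal{O}(SU(2))$ is cosemisimple with bijective antipode, the paragraph following Definition \ref{def:pHcomodalg} upgrades bijectivity of the canonical map to principality of $A$.

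With these facts in hand, Corollary \ref{cor2:diagr-can2} applies verbatim with the cocycle $\sigma$ of (\ref{cocycleT2}): since $A = \mathcal{O}(S^7)$ is a principal $H$-comodule algebra, so is its $\sigma$-twist ${}_\sigma A = \mathcal{O}(S^7_\theta)$, with unchanged structure Hopf algebra $H = \mathcal{O}(SU(2))$. This is exactly the assertion of the proposition.

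There is no genuine obstacle here beyond bookkeeping: the substantive work — producing a section ${}_\sigma s = \fS(s) \circ (\col_{B,A})^{-1}$ of the deformed multiplication out of a section $s$ of the undeformed one, and checking that it is a left ${}_\sigma B$-module and right $H$-comodule morphism — is already carried out in the proof of Corollary \ref{cor2:diagr-can2}. The one point I would stress, to forestall confusion, is that the deformation acts on the external-symmetry side through $K$, so that $H$ (and hence the very notion of ``$\mathcal{O}(SU(2))$-comodule algebra'') is unchanged; this is precisely why Corollary \ref{cor2:diagr-can2}, rather than Corollary \ref{cor-pcomodalg}, is the statement to invoke.
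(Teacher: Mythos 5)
Your proposal is correct and follows exactly the paper's own route: the paper likewise obtains Proposition \ref{propLS} by noting that $\mathcal{O}(S^7)$ is a principal $\mathcal{O}(SU(2))$-comodule algebra (Hopf--Galois from the classical bundle, principality via cosemisimplicity and bijective antipode of $\mathcal{O}(SU(2))$), that it is a $(K,H)$-bicomodule algebra with $K=\mathcal{O}(\mathbb{T}^2)$, and then invoking Corollary \ref{cor2:diagr-can2} for the cocycle $\sigma$ of (\ref{cocycleT2}). Your emphasis that the deformation is on the external-symmetry side, leaving $H$ untouched, matches the paper's framing precisely.
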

The noncommutative bundle so obtained is the quantum Hopf bundle on the  Connes-Landi
 sphere $\mathcal{O}(S^4_\theta)$  that was originally constructed in \cite{gw}, and further studied  
 in the context of 2-cocycles deformation in \cite{gs}.  The principality of the algebra inclusion 
$\mathcal{O}(S^4_\theta) \subset
\mathcal{O}(S^7_\theta)$
was first proven in \cite[\S 5]{gw} by explicit
construction of the inverse of the canonical map. Proposition
\ref{propLS} follows instead  as a straightforward result of  the
general theory developed in the present section (out of the principality of the underlying classical bundle).
\end{ex}

\subsection{\label{sec:combidef}Deformations of both the ``structure
  group'' $H$  and the ``base space'' $B$
}
We now consider the combination of the previous two deformations. 
This leads to Hopf-Galois extensions in which the structure Hopf
algebra,  total space and base space are all deformed. 
\sk

As before, we let $H$ and $K$ be Hopf algebras and $A$ be a $(K,H)$-bicomodule
algebra, with $B=A^{coH}$.
Let $\sigma: K\otimes K\to \bbK$ and $\cot : H\otimes H\to \bbK$ be $2$-cocycles
and denote by $K_\sigma$ and $H_\cot$ the twisted Hopf algebras and by
${}_\sigma A_\cot := {}_\sigma(A_\cot) = ({}_\sigma A)_\cot$ the deformed
$(K_\sigma,H_\cot)$-bicomodule algebra. We also have the deformed
$(K_\sigma,H_\cot)$-bicomodule algebra ${}_\sigma B_\cot=({}_\sigma A_\cot)^{co{H_\cot}}\subset
{}_\sigma A_\cot$ of $H_\cot$-coinvariants in ${}_\sigma
A_\cot$. Notice that ${}_\sigma B_\cot=({}_\sigma
A_\cot)^{co{H_\cot}}={}_\sigma (A_\cot^{co{H_\cot}})={}_\sigma
(A^{co{H}})={}_\sigma B$.
Hence we can consider the canonical map ${}_\sigma \chi_{\cot} : {}_\sigma A_\cot \, {{{}^{\sigma\!\!\:}\otimes^\cot}_{\!\!\!{}_\sigma B}}\, {}_\sigma A_\cot \to
{}_\sigma A_\cot \,{^{\sigma}\otimes^\cot}\, \underline{H_\cot}$
that  is a ${}^{K_\sigma}{}_{{}_\sigma A_\cot}{\cal M}_{{}_\sigma   
   A_\cot}{}^{H_\cot}$-morphism because of  Proposition \ref{canmapKmorph}.
There are two ways to relate  ${}_\sigma \chi_{\cot} $ to the canonical map $\chi: A \otimes_ B A
 \to A\otimes \underline{H}$. 
We can first relate $\chi$ to $\chi_\gamma$ and then $\chi_\gamma$ to
${}_\sigma \chi_{\cot}$, or first $\chi$ to ${}_\sigma\chi$ and then ${}_\sigma\chi$ to
${}_\sigma \chi_{\cot}$. It can be proven that these two constructions are equivalent
because the left $K$-coaction commutes with the right $H$-coaction.
More precisely we can apply the functor $\Sigma$ to the
 commutative
 diagram (\ref{diagr-can-pre}) of Theorem \ref{theo:diagr-can-pre} and
 then top the resulting diagram with the analogue of the commutative diagram
 (\ref{cd}) of Theorem \ref{Th:diagr-can2}, or we can first apply the
 functor $\Gamma$ to (\ref{cd}) and then top it with the analogue of
 (\ref{diagr-can-pre}). 
Either of these equivalent procedures leads to commutative diagrams in ${}^{K_\sigma}{}_{{}_\sigma A_\cot}{\cal M}_{{}_\sigma   
   A_\cot}{}^{H_\cot}$ and to
the following result
\begin{thm}\label{Th:diagr-can3}
Given two Hopf aglebras $K$ and $H$, a $(K,H)$-bicomodule algebra $A$ and two $2$-cocycles $\sigma:
K\otimes K\to \bbK$ and $\cot: H\otimes H\to \bbK$, we have\\
(i)  $B=A^{coH} \subset A$ is an $H$-Hopf-Galois extension if and only if ${}_\sigma B \subset {}_\sg \pg$ is an $\hg$-Hopf-Galois
 extension.  \\
(ii) $A$ is a principal $H$-comodule algebra if and only if ${}_\sigma A_\cot$ is a principal $H_\cot$-comodule 
 algebra.
\end{thm}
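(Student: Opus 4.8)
The plan is to prove both parts by iterating the two single-deformation results already established, the essential point being that the $\cot$-deformation of the ``structure group'' and the $\sigma$-deformation of the ``base space'' commute. The enabling observation, recorded just before the statement, is that the commuting coactions (\ref{compatib}) make the doubly deformed algebra unambiguous, ${}_\sigma A_\cot = {}_\sigma(A_\cot) = ({}_\sigma A)_\cot$, with $H_\cot$-coinvariants ${}_\sigma B_\cot = {}_\sigma B$; this is precisely what lets the two deformations be stacked in either order, and it also guarantees that $A_\cot$ remains a $(K,H_\cot)$-bicomodule algebra (the $K$-coaction is untouched by the $\cot$-deformation, and (\ref{compatib}) is the same equation for the deformed product).

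For part (i) I would connect the canonical map $\chi$ of $B \subset A$ to the canonical map ${}_\sigma\chi_\cot$ of ${}_\sigma B \subset {}_\sigma A_\cot$ by composing two commutative diagrams whose vertical legs are isomorphisms. Viewing $A_\cot$ as a $(K,H_\cot)$-bicomodule algebra, Theorem \ref{theo:diagr-can-pre} supplies diagram (\ref{diagr-can-pre}), relating $\chi$ to $\chi_\cot$ through the isomorphisms $\varphi_{A,A}$, $\id\otimes^\cot\Q$ and $\varphi_{A,\underline{H}}$; applying the external-symmetry Theorem \ref{Th:diagr-can2} to this same bicomodule algebra $A_\cot$ supplies the analogue of diagram (\ref{cd}), relating $\chi_\cot$ to ${}_\sigma\chi_\cot$ through the isomorphisms $\col_{A,A}$ and $\col_{A,\underline{H}}$. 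Stacking the two diagrams, every vertical arrow is an isomorphism, so $\chi$ is bijective if and only if ${}_\sigma\chi_\cot$ is; by Definition \ref{def:hg} this is exactly the claimed equivalence of Hopf-Galois extensions.

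For part (ii) I would chain the two single-deformation corollaries rather than re-prove equivariant projectivity directly. By Corollary \ref{cor-pcomodalg}, $A$ is a principal $H$-comodule algebra if and only if $A_\cot$ is a principal $H_\cot$-comodule algebra; applying Corollary \ref{cor2:diagr-can2} to the $(K,H_\cot)$-bicomodule algebra $A_\cot$ deformed by $\sigma$, this in turn holds if and only if ${}_\sigma(A_\cot) = {}_\sigma A_\cot$ is a principal $H_\cot$-comodule algebra. Concretely the two section transports compose: a left $B$-module right $H$-comodule section $s$ of the multiplication is carried first to $s_\cot = \varphi_{B,A}^{-1}\circ\Gamma(s)$ by the $\cot$-deformation and then to its $\sigma$-deformation by the construction of Corollary \ref{cor2:diagr-can2}, each step again yielding a left-module right-comodule section of the corresponding deformed multiplication.

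The main obstacle is not any of these formal compositions but verifying that the two orders of deformation genuinely agree: deforming $H$ first and then $K$ must produce the same doubly deformed canonical map ${}_\sigma\chi_\cot$, with the same relating isomorphisms, as deforming $K$ first and then $H$. This is where the compatibility (\ref{compatib}) does the real work, ensuring that the functors $\Gamma$ and $\Sigma$ commute on objects and morphisms and that the natural transformations $\varphi$ and $\col$ intertwine correctly, so that applying $\Sigma$ to (\ref{diagr-can-pre}) and topping it with the analogue of (\ref{cd}) yields the same commutative diagram in ${}^{K_\sigma}{}_{{}_\sigma A_\cot}{\cal M}_{{}_\sigma A_\cot}{}^{H_\cot}$ as applying $\Gamma$ to (\ref{cd}) and topping it with the analogue of (\ref{diagr-can-pre}). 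Once this coherence is checked, the isomorphy of all vertical legs delivers both (i) and (ii) at once.
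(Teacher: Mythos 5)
Your proposal is correct and takes essentially the same route as the paper: the paper also proves (i) by stacking the commutative diagram (\ref{diagr-can-pre}) with the analogue of diagram (\ref{cd}), relying on the key observations that ${}_\sigma A_\cot = {}_\sigma(A_\cot) = ({}_\sigma A)_\cot$ and ${}_\sigma B_\cot = {}_\sigma B$ so that all vertical arrows are isomorphisms, and (ii) by transporting sections exactly as in Corollaries \ref{cor-pcomodalg} and \ref{cor2:diagr-can2}. The only minor discrepancy is your framing of the order-independence of the two deformations as the main obstacle: either single route (first $\cot$ then $\sigma$, or vice versa) already suffices for the theorem, the agreement of the two routes being a bonus consistency statement in the paper rather than a needed ingredient.
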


\begin{ex}[Formal deformation quantization]\label{FDQ}
Recall from Example \ref{ex:principalbundle}, that if $(E, M=E/G, \pi, G)$ is a principal bundle in the smooth category
(i.e., if $E$ and $M$ are smooth manifolds, $G$ is a a Lie group) 
we have a Fr{\'e}chet $H$-Hopf-Galois extension 
$B=C^\infty(M)= A^{coH}\subset A=C^\infty(E)$ with $H=C^\infty(G)$
and $\bbK=\mathbb{C}$.
 Let us further consider a finite dimensional Lie group $\LL$ that is 
a Lie subgroup of the automorphism group of $(E, M, \pi, G)$, so that
together with $L$ we have a canonical smooth left action of $L$ on $E$
and $M$ that commutes with the right $G$-action.
The left $\LL$-actions on $E$ and $M$ pull-back to a Fr{\'e}chet left
$K = C^\infty(\LL)$-comodule structure on $A$ and $B$, which is compatible with the right $H$-coaction on $A$ and 
the canonical map, i.e.\ $A =C^\infty(E)$
is a Fr{\'e}chet $(K=C^\infty(\LL),H=C^\infty(G))$-bicomodule algebra.

We consider formal deformations of $H$, $K$ and $A$ because in this
context  2-cocycles are easily obtained, cf. Remark \ref{twistcoc}, from formal Drinfeld twists on the
universal enveloping algebras $U(\mathfrak{g})$ and
$U(\mathfrak{l})$, where $\mathfrak{g}$ and $\mathfrak{l}$ are the
Lie algebras of $G$ and $L$ respectively. 
Therefore we consider the formal power series extension in a deformation  
parameter $\hbar$
of the $\mathbb{C}$-modules $H$, $A$, $B$ and $K$, that we denote as usual
$H[[\hbar]]$, $A[[\hbar]]$, $B[[\hbar]]$ and $K[[\hbar]]$. The natural
topology on these $\mathbb{C}[[\hbar]]$-modules is a combination of the original Fr{\'e}chet topology in each order of $\hbar$
together with the $\hbar$-adic topology, see e.g.\ \cite[Chapter XVI]{Kassel}.
The canonical map induces a continuous 
$\mathbb{C}[[\hbar]]$-linear isomorphism (denoted with abuse of notation by the same symbol)
\begin{flalign}
\chi : A[[\hbar]]\,\widehat{\otimes}_{B[[\hbar]]}\, A[[\hbar]] \simeq C^\infty(E\times_M E)[[\hbar]] 
\longrightarrow A[[\hbar]] \,\widehat{\otimes}\,\underline{H}[[\hbar]]\simeq C^\infty(E\times G)[[\hbar]]~,
\end{flalign}
where now $\widehat{\otimes}$ denotes the completion of the algebraic tensor product with respect to the natural topologies described above. Hence we have obtained a topological
$H[[\hbar]]$-Hopf-Galois extension $B[[\hbar]] = A[[\hbar]]^{coH[[\hbar]]}\subset A[[\hbar]]$.
The existence of continuous $2$-cocycles
$\gamma :  H[[\hbar]]\,\widehat{\otimes}\,H[[\hbar]] \to\bbK[[\hbar]]$
and
$\sigma  : K[[\hbar]]\,\widehat{\otimes}\,K[[\hbar]] \to\bbK[[\hbar]]$
follows from the existence of Drinfeld  twist deformations of the universal
enveloping algebras $U(\mathfrak{g})[[\hbar]]$ and
$U(\mathfrak{l})[[\hbar]]$.
We now twist the $\mathbb{C}[[h]]$-modules $H[[\hbar]]$, $A[[\hbar]]$, $B[[\hbar]]$ and
$K[[\hbar]]$  as described in general in Section \ref{sec:twists}, and
obtain a noncommutative topological $H[[\hbar]]_\gamma$-Hopf-Galois extension
${}_\sigma B[[\hbar]] =
{}_{\sigma}A[[\hbar]]_\gamma^{coH[[\hbar]]_\gamma} \subset
{}_{\sigma}A[[\hbar]]_\gamma$, in particular the structure group $G$
has been deformed in a quantum group $G_\cot$ that is described by the
Hopf algebra $H[[\hbar]]_\gamma$, and similarly the base space $M$ is
deformed in a noncommutative base space ${}_\sigma M$ decribed by the algebra
${}_\sigma B[[\hbar]]$.
\end{ex}
\sk

Example \ref{FDQ} is very general and it is interesting to specialize
it to specific cases. For example deformations of homogenous spaces
 into quantum homogeneous spaces are obtained via this combined
twist deformation of the structure group and of the base space \cite{ABPS}.
Another application is in the formulation of gravity on
noncommutative spacetime. We consider a $4$-dimensional manifold
which admits a  Lorentzian metric. We
correspondingly have the principal  $SO(3,1)$-bundle of orthonormal
frames and also the principal $ISO(3,1)$ bundle of
orthonormal affine frames. Hence we can consider  Drinfeld twists  of
the universal enveloping algebras $U(so(3,1))$ and $U(iso(3,1))$ of the
Lorentz and Poincar\'e groups, for example the abelian
twists discussed in \cite{pl96}, \cite{Lukierski:2005fc} or even the nonabelian one (of
extended Jordanian type) studied in \cite[\S V]{Borowiec:2013lca}. 
These twists give deformations of the structure groups of the principal
bundles relevant in gravity.  Gravity theories on
commutative spacetime in the vierbein formalism obtained by gauging a
quantum  Poincar\'e group have been studied in \cite{Castellani}. The
present construction would allow to consider also non local (globally
nontrivial) aspects of these gravity theories.
It is interesting to further twist deform
the base space of these principal bundles, this is a first step in order to
obtain a vierbein gravity theory on noncommutative spacetime with
quantum Lorentz group invariance.


\begin{thebibliography}{99}

\bibitem{ABPS}
P.~Aschieri, P.~Bieliavsky, C.~Pagani and A.~Schenkel,
  \emph{Noncommutative principal bundles through twist deformation,}
  arXiv:1604.03542 [math.QA]. To appear in Commun.\ Math.\ Phys.


\bibitem{pl96} 
P.~Aschieri and L.~Castellani, 
\emph{$R$-matrix formulation of the quantum inhomogeneous groups $ISO_{q,r}(N)$ and $ISp_{q,r}(N)$,} 
Lett.\ Math.\ Phys.\ {\bf 36} (1996) 197

\bibitem{ADMSW}
  P.~Aschieri, M.~Dimitrijevic, F.~Meyer, S.~Schraml and J.~Wess,
  \emph{Twisted gauge theories,}
  Lett.\ Math.\ Phys.\  {\bf 78} (2006) 61

\bibitem{NCG2}
P.~Aschieri, M.~Dimitrijevic, F.~Meyer and J.~Wess,
\emph{Noncommutative geometry and gravity,}
Class.\ Quant.\ Grav.\  {\bf 23} (2006) 1883


\bibitem{AS} 
P.~Aschieri and A.~Schenkel, 
\emph{Noncommutative connections on bimodules and Drinfeld twist deformation,}
Adv.\ Theor.\ Math.\ Phys.\  {\bf 18} (2014) 513
 

\bibitem{BSS}
G.~E.~Barnes, A.~Schenkel and R.~J.~Szabo,
\emph{Nonassociative geometry in quasi-Hopf representation categories I: Bimodules and their internal homomorphisms,}
J.\ Geom.\ Phys.\  {\bf 89} (2014) 111


\bibitem{Borowiec:2013lca}
  A.~Borowiec and A.~Pachol,
  \emph{Unified description for $\kappa$-deformations of orthogonal groups,}
  Eur.\ Phys.\ J.\ C {\bf 74} (2014) no.3,  2812

  
\bibitem{gs} 
S.~Brain and G.~Landi,
\emph{Moduli spaces of non-commutative instantons: gauging away non-commutative parameters,} 
Q.\ J.\ Math.\ {\bf 63} (2012) 41


\bibitem{tok-notes} 
T.~Brzezi\'nski, G.~Janelidze and T.~Maszczyk, 
\emph{Galois structures,} 
in P.M.~Hajac (Ed.)  
Lecture Notes on Noncommutative Geometry and Quantum Groups. 
Available at \url{http://www.mimuw.edu.pl/~pwit/toknotes/toknotes.pdf}, 
T.~Brzezi\'nski and S.~A.~Fairfax \emph{Bundles over Quantum Real Weighted Projective Spaces},
Axioms 2012, 1, 201

\bibitem{Castellani}
  L.~Castellani,
  \emph{Differential calculus on ISO-q(N), quantum Poincare algebra and q gravity,}
  Commun.\ Math.\ Phys.\  {\bf 171} (1995) 383
and 
  \emph{The Lagrangian of q Poincare gravity,}
  Phys.\ Lett.\ B {\bf 327} (1994) 22
  
\bibitem{cl} 
A.~Connes and G.~Landi,
\emph{Noncommutative manifolds: the instanton algebra and isospectral deformations,}
Commun.\ Math.\ Phys.\ {\bf 221} (2001) 141--159.


\bibitem{doi} 
Y.~Doi,
\emph{Braided bialgebras and quadratic bialgebras,} 
Comm.\ Algebra {\bf 21} (1993) 1731


\bibitem{Dri83} 
V.~G.~Drinfeld, 
\emph{On constant quasiclassical solutions of the Yang-Baxter quantum equation,} 
Soviet Math.\ Dokl.\ {\bf 28} (1983) 667
and
\emph{Hopf algebras and the quantum Yang-Baxter equation,} 
Soviet Math.\ Dokl.\ {\bf 32} (1985) 254


\bibitem{DV} 
M.~Dubois-Violette and T.~Masson, 
\emph{On the first order operators in bimodules,}
Lett.\ Math.\ Phys.\ {\bf 37} (1996) 467


\bibitem{Mauro} M.~Francaviglia, \emph{Element of Differential and Riemannian
      Geometry,} Bibliopolis (1988)

\bibitem{Kassel}
C.~Kassel,
\emph{Quantum Groups,}
Springer-Verlag, New York (1995)


\bibitem{gw} 
G.~Landi and W.~van Suijlekom,
\emph{Principal fibrations from noncommutative spheres,} 
Commun.\ Math.\ Phys.\ {\bf 260} (2005) 203


\bibitem{Husemoller}
D.~Husemoller, \emph{Fibre Bundles}, GTM, Springer (1993)


\bibitem{Lukierski:2005fc}
  J.~Lukierski and M.~Woronowicz,
  \emph{New Lie-algebraic and quadratic deformations of Minkowski space from twisted Poincare symmetries,}
  Phys.\ Lett.\ B {\bf 633} (2006) 116

\bibitem{MS05} 
S.~Montgomery and H.J.~Schneider,  
\emph{Krull relations in Hopf Galois extensions: lifting and twisting,} 
J.\ Algebra {\bf 288} (2005) 364

\bibitem{Mou} 
J.~Mourad, 
\emph{Linear connections in noncommutative geometry,} 
Class.\ Quant.\ Grav.\ {\bf 12} (1995) 96


\bibitem{SchSch}
P.~Schauenburg and H.J.~Schneider,
\emph{On generalized Hopf Galois extensions,}
J.\ Pure App.\ Algebra {\bf 202} (2005) 168


\bibitem{Sch} 
H.J.~Schneider, 
\emph{Principal homogeneous spaces for arbitrary Hopf algebras,} 
Israel J.\ Math.\ {\bf 72} (1990) 167


\end{thebibliography}
\end{document}